\documentclass[a4paper,12pt]{amsart} 
\usepackage{a4wide}

\usepackage{graphicx} 

\usepackage[utf8]{inputenc}

\usepackage{amsmath}
\usepackage{amssymb}

\usepackage{hyperref}
\hypersetup{
	colorlinks,
	citecolor=black,
	filecolor=black,
	linkcolor=black,
	urlcolor=black
}

\usepackage{url}

\usepackage{tikz}

\tikzstyle{level 1}=[level distance=3.5cm, sibling distance=3.5cm]
\tikzstyle{level 2}=[level distance=3.5cm, sibling distance=2cm]

\tikzstyle{node} = [text width=7em, text centered]
\tikzstyle{leaf} = [circle, minimum width=3pt,fill, inner sep=0pt]

\def\cb{\mathcal{B}}

\def\cd{\mathcal{D}}

\def\cf{\mathcal{F}}

\def\ch{\mathcal{H}}

\def\ct{\mathcal{T}}
\def\cu{\mathcal{U}}
\def\cv{\mathcal{V}}

\def\w{{\omega}}

\def\bbn{\mathbb{N}}

\def\bbr{\mathbb{R}}
\def\bbz{\mathbb{Z}}

\def\clo #1{{\overline{#1}}}

\def\se{\subseteq}

\def\with{{}^\smallfrown}

\newtheorem{definition}{Definition}[subsection]
\newtheorem{theorem}{Theorem}[subsection]
\newtheorem{lemma}{Lemma}[subsection]
\newtheorem{claim}{Claim}[subsection]

\newtheorem{proposition}{Proposition}[subsection]
\newtheorem{cor}{Corollary}[subsection]

\newtheorem{remark}{Remark}[subsection]
\newtheorem{example}{Example}[subsection]

\def\then{\longrightarrow}
\def\iff{\longleftrightarrow}

\def\<{\langle}
\def\>{\rangle}

\title{Journey into special $T_1$-spaces}

\author{Robert Rałowski}
\address{Robert Rałowski, Department of Pure Mathematics, Faculty of Mathematics, Wrocław University of Science and Technology, 50-370 Wrocław, Poland}
\email{robert.ralowski@pwr.edu.pl}

\date{}

\begin{document}

\begin{abstract}
In this survey, we review certain types of special \(T_{1}\) spaces and their associated fixed-point theorems. Kupka introduced the notion of a feeble topological contraction, which naturally generalizes Lipschitz contractions defined on metric spaces. Specifically, Kupka established a fixed-point theorem for feeble topological contractions possessing a closed graph within the product of arbitrary \(T_{0}\) spaces. Furthermore, the \(T_{1}\) separation axiom is shown to guaranty the uniqueness of such fixed points. Finally, we discuss peripheral Hausdorff and locally Hausdorff spaces within the context of these fixed-point results.
\end{abstract}

\maketitle

\begin{center}
    \thispagestyle{empty}
    \vspace*{\fill}
    To my friend Alexander Cale Pavlovi\'{c} (1974-2025)
    \vspace*{\fill}
\end{center}
This paper is dedicated to Alexander Cale Pavlovi\'{c}. A wonderful man has left us. Alexander Pavlovi\'{c} will remain in our memories as a warm, charming, and humorous person. Alexander leaves behind an outstanding scientific legacy, he was a truly brilliant mathematician.

\tableofcontents

\section{Introduction}
Topology is included in the broad spectrum of Alexander Cale Pavlović's scientific research.
Many of Alexander's papers concern topological spaces that are not necessarily Hausdorff spaces (see, for example, \cite{kurilic2014convergence, pavlovic2016local} and \cite{kurilic2024closed}). In this survey, I investigate topological spaces satisfying a separation axiom weaker than the Hausdorff property.

In 1920, Stefan Banach, in his doctoral thesis, proved the celebrated fixed point theorem.

\begin{theorem}[Banach (1920)] If $(X,d)$ is a complete metric space and $f:X\to X$ is a Lipschitz contraction, i.e.
$$
(\exists c\in[0,1))(\forall x,y\in X)\; (d(f(x),f(y)) \le c \cdot d(x,y)).
$$
then there exists a unique $x\in X$ s.t. $x = f(x).$
\end{theorem}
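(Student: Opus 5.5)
We will prove existence by Picard iteration and uniqueness directly from the contraction inequality. Throughout, $c\in[0,1)$ denotes the Lipschitz constant from the statement.

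\textbf{Step 1 (the iteration contracts).} Fix an arbitrary $x_0\in X$ and define $x_{n+1}=f(x_n)$ for $n\ge 0$. Applying the contraction inequality $n$ times gives
$$d(x_{n+1},x_n)\le c^n\, d(x_1,x_0)\quad\text{for all } n.$$

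\textbf{Step 2 (the sequence is Cauchy).} This is the only step with real content. For $m>n$, the triangle inequality and the geometric series give
$$d(x_m,x_n)\le \sum_{k=n}^{m-1} c^k\, d(x_1,x_0)\le \frac{c^n}{1-c}\, d(x_1,x_0).$$
The right-hand side tends to $0$ as $n\to\infty$ because $c<1$. If $d(x_1,x_0)=0$, then $x_0$ is already a fixed point and we are done. Hence $(x_n)$ is Cauchy, and by completeness of $X$ it converges to some $x\in X$.

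\textbf{Step 3 ($x$ is a fixed point).} A Lipschitz map is continuous, since $d(f(y),f(z))\le c\,d(y,z)$. Therefore
$$f(x)=\lim_n f(x_n)=\lim_n x_{n+1}=x.$$
For an estimate-based version, note that $d(f(x),x)\le c\,d(x,x_n)+d(x_{n+1},x)\to 0$, so again $f(x)=x$.

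\textbf{Step 4 (uniqueness).} Suppose $x=f(x)$ and $y=f(y)$. Then $d(x,y)=d(f(x),f(y))\le c\,d(x,y)$, which gives $(1-c)\,d(x,y)\le 0$. Since $c<1$, this forces $d(x,y)=0$, so $x=y$.

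The main obstacle is the uniform Cauchy estimate in Step 2. It is exactly where both hypotheses are used: $c<1$ makes the geometric series converge, and completeness of $X$ supplies the limit. Everything else is routine.
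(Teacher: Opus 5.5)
Your proof is correct and complete. It is the standard Picard-iteration argument, and each step is sound: the bound $d(x_{n+1},x_n)\le c^n d(x_1,x_0)$, the tail estimate $d(x_m,x_n)\le c^n d(x_1,x_0)/(1-c)$, passing to the limit by continuity of $f$, and uniqueness from $(1-c)\,d(x,y)\le 0$.

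The paper gives no proof of this theorem. It is quoted in the introduction only as the classical model for the topological fixed-point results that follow, so there is no argument in the paper to compare yours with.

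Two minor remarks. Choosing $x_0$ tacitly uses $X\neq\emptyset$, which the statement also assumes implicitly. The aside about $d(x_1,x_0)=0$ is superfluous, since your tail estimate already covers that case.

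The paper's topological machinery also does not give an independent route to this result. Verifying that a Lipschitz contraction is a feeble$+$ topological contraction, as the paper remarks in passing, amounts to showing that every orbit converges. That is exactly your Step~2 together with completeness. Without completeness it fails: $t\mapsto t/2$ on $(0,1]$ is not even a feeble topological contraction. To see this, take $x=1$, $y=1/2$ and the open cover by the sets $(2^{-k-1},2^{-k+1})\cap(0,1]$, $k\ge 0$.
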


In literature, we can find many generalizations of this theorem. Much of it concerns Hausdorff spaces. This article is devoted to some class $T_1$ spaces in which we can provide results similar to the Banach fixed point theorem. Of course, if we are outside of metric space, we must redefine the notion of contraction. Ivan Kupka founded an analogy of the mentioned notion, called feebly topological contractive see \cite{K}.

\begin{definition}[Feebly contraction] Let $X$ be a topological space and $f:X\to X.$ W. We say that $f$ is a feebly topological contractive if
$$
(\forall \cu )(\forall x,y\in X) \; (\cu \text{ open cover of }X \then (\exists U\in\cu)(\exists n\in\w)\ f^n [\{x,y\}] \se U).
$$
\end{definition}
Here $f^n = \underbrace{f\circ\ldots\circ f}_{n}$ stands for $n$-th iteration of $f.$

Kupka's result concerns topological spaces $T_1.$ As an example, we will give a version related to functions defined on such spaces.

\begin{theorem}[Kupka (1998)]\label{Kupka-function-thm} If $X$ is $T_0$ topological space, $f:X\to X$ is a feebly topological contraction with closed graph then There exists $x\in X$ s.t. $x = f(x).$ Moreover, if $X$ is $T_1$ space then such $x\in X$ is unique.
\end{theorem}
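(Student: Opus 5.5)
The plan is a direct contradiction argument. The closed graph lets us turn ``no fixed point'' into an open cover whose members contain no pair $\{z,f(z)\}$. Feeble contractivity, applied to the pair $x, f(x)$, then produces exactly such a pair inside one member. Uniqueness comes from a two-element cover built from the $T_1$ axiom. I expect the $T_0$ hypothesis will not actually be used in the existence part.

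\emph{Existence.} Suppose $f$ has no fixed point, and let $G=\{(z,f(z)) : z\in X\}$ be the graph of $f$, which is closed in $X\times X$.
\begin{enumerate}
\item For each $x\in X$ we have $(x,x)\notin G$, since $f(x)\neq x$.
\item Because $G$ is closed, there is a basic open box around $(x,x)$ missing $G$. Intersecting its two factors gives an open $U_x\ni x$ with $(U_x\times U_x)\cap G=\emptyset$. In words: there is no $z\in U_x$ with $f(z)\in U_x$.
\item The family $\cu=\{U_x : x\in X\}$ is an open cover of $X$.
\item Fix any $x\in X$ and apply the definition of feeble topological contraction to $\cu$ and the pair $x$, $y=f(x)$. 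This gives $U_{x'}\in\cu$ and $n\in\w$ with $f^n(x)\in U_{x'}$ and $f^{n+1}(x)=f(f^n(x))\in U_{x'}$.
\item Taking $z=f^n(x)$, we get $z\in U_{x'}$ and $f(z)\in U_{x'}$, contradicting the choice of $U_{x'}$.
\end{enumerate}
Hence $f$ has a fixed point.

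\emph{Uniqueness under $T_1$.} Suppose $x\neq y$ are both fixed points. In a $T_1$ space singletons are closed, so $\cu=\{X\setminus\{x\},\,X\setminus\{y\}\}$ is an open cover; it covers $X$ because $x\neq y$. Since $x$ and $y$ are fixed, $f^n[\{x,y\}]=\{x,y\}$ for every $n\in\w$. No member of $\cu$ contains both $x$ and $y$, which contradicts feeble contractivity.

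The only step needing care is step 2: one must use the diagonal point $(x,x)$, not $(x,f(x))$. Closedness of $G$ at the single point $(x,x)$ is exactly what yields an open set containing no pair $\{z,f(z)\}$, and hence the bad cover. Everything else is routine.
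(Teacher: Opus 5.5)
Your proof is correct and is essentially the paper's argument: the paper proves the more general multifunction version by using closedness of the graph at diagonal points to build a cover $\{U_x\}$ with $(U_x\times U_x)\cap G_F=\emptyset$, applying feeble contractivity to a pair $x$, $y\in F(x)$ to force $U\cap F[U]\neq\emptyset$, and getting uniqueness from the cover $\{X\setminus\{x\},X\setminus\{y\}\}$. Your version is exactly this specialized to $F(x)=\{f(x)\}$, and you are right that $T_0$ plays no role in the existence part.
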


In the separate section \ref{Kupka-section}, we formulate and prove the strongest version of the Kupka fixed-point theorem.

Let us observe that every continuous map between Hausdorff spaces has a closed graph. Then we have an immediate corollary.
\begin{cor} Every continuous feebly topological contraction on a Hausdorff space has a unique fixed point.
\end{cor}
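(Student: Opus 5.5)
The corollary follows directly from Theorem \ref{Kupka-function-thm}: we only need to check that a continuous feebly topological contraction $f:X\to X$ on a Hausdorff space $X$ meets its hypotheses.

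First, a Hausdorff space is in particular $T_1$, hence also $T_0$. So the existence part of Theorem \ref{Kupka-function-thm} applies once the graph is shown to be closed, and the uniqueness part applies as well.

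Second, we verify, as remarked just before the corollary, that the graph $\Gamma_f=\{(x,f(x)):x\in X\}$ is closed in $X\times X$. The plan is to show that the complement is open.
\begin{itemize}
\item Take $(x,y)\notin\Gamma_f$, so $y\neq f(x)$.
\item By the Hausdorff property, choose disjoint open sets $V\ni y$ and $W\ni f(x)$.
\item By continuity of $f$, the set $U=f^{-1}[W]$ is an open neighbourhood of $x$.
\item The open box $U\times V$ contains $(x,y)$ and misses $\Gamma_f$: if $(u,f(u))\in U\times V$, then $f(u)\in W\cap V=\emptyset$, which is impossible.
\end{itemize}
Hence $\Gamma_f$ is closed. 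Note that only the Hausdorffness of the codomain is used here.

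Combining the two steps, Theorem \ref{Kupka-function-thm} gives a point $x=f(x)$, and since $X$ is $T_1$, this fixed point is unique. There is no real obstacle: the only step with content is the closed-graph argument above.
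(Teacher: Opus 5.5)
Your proposal is correct and follows exactly the paper's route: the paper deduces the corollary from Theorem \ref{Kupka-function-thm} via the remark that a continuous map into a Hausdorff space has a closed graph. You just write out that closed-graph argument, correctly, together with the check that Hausdorff implies $T_1$ and hence $T_0$.
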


The assumption that $f \se X\times X$ is closed is quite strong condition in $T_1$ spaces. Even the identity on $\w$ with the co-finite topology (every nonempty open set is co-finite in $\w$) does not have a closed graph in $\w\times\w$ with the product topology but is still continuous. 

The following example shows that the assumption in the Kupka Theorem is necessary.
\begin{example} Let $(\w,\tau)$, where  $\tau = \{\emptyset\}\cup\{U\in P(\w):U^c \text{ is finite}\}.$ Let $f:X\to X$ s.t. for every $n\in\w $ $f(n) = n+1.$ It is easy to check that $f$ is a continuous feeble contraction, but $f\se \w\times\w $ is not closed.
\end{example}

Another concept of topological contraction is realized on compact topological spaces defined in \cite{MR2}. In previously defined spaces, the continuous contraction plays a role in fixed-point theorems. Here, closed topological contractions are used in the theorem for which we have a fixed point.

\begin{definition} If $X$ is a $T_1$ compact space the $f:X\to x$ is a topological contraction if for every open cover $\cu$ of $X$ there are $n\in\w$ and $U\in \cu$ s.t. $f^n[X] \se U.$
\end{definition}

Here we have
\begin{theorem} For every closed topological contraction on the compact $T_1$  there is an exactly one fixed point of $f.$
\end{theorem}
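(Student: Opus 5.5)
The natural candidate for the fixed point is the intersection of the nested closed sets $K_n = f^n[X]$. Since $f$ is closed, each $f^n[X]$ is closed in $X$. Since $f[X]\se X$, induction gives $f^{n+1}[X]\se f^n[X]$, so $(K_n)_{n\in\w}$ is a decreasing sequence of nonempty closed subsets of the compact space $X$ (assuming $X\neq\emptyset$). By compactness and the finite intersection property, $K=\bigcap_n K_n$ is nonempty.

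First I show that $K$ is a single point. Suppose $x\neq y$ both lie in $K$. Because $X$ is $T_1$, the sets $X\setminus\{x\}$ and $X\setminus\{y\}$ are open, and together they cover $X$. By the contraction hypothesis there is $n$ with $f^n[X]\se X\setminus\{x\}$ or $f^n[X]\se X\setminus\{y\}$. This is impossible, since $x,y\in K\se f^n[X]$. Hence $K=\{p\}$ for some $p$.

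Next I show $f(p)=p$. The key identity is $f[K]=K$, or at least $f[K]\supseteq K$ together with $f[K]\se K$. The inclusion $f[K]\se K$ is immediate, because $f[K_n]=K_{n+1}\se K_n$. This already gives $f(p)\in K=\{p\}$, so $f(p)=p$. For the record, the reverse inclusion is the step that usually needs compactness: given $z\in K$, the sets $f^{-1}(z)\cap K_n$ are nonempty for all $n$, since $z\in K_{n+1}=f[K_n]$, and $T_1$ makes $f^{-1}(z)$ closed. That argument is not needed here, because a single point is already forced.

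Uniqueness: if $q=f(q)$, then $q=f^n(q)\in f^n[X]$ for every $n$, so $q\in K=\{p\}$.

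The main subtlety is making sure that "closed topological contraction" means that $f$ is a closed map, which is what makes $f^n[X]$ closed. Continuity of $f$ is not needed anywhere. The other point to check is that the $T_1$ cover $\{X\setminus\{x\},X\setminus\{y\}\}$ is a legitimate open cover, which it is precisely because $x\neq y$.
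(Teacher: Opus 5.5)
Your proof is correct and is essentially the paper's argument (Theorem~\ref{Banach-T1-complete}, specialized to compact spaces as Theorem~\ref{compact-fix}). The closed sets $f^n[X]$ decrease and have nonempty intersection. The two-point cover $\{X\setminus\{x\},X\setminus\{y\}\}$ forbids two distinct points in that intersection, and any fixed point lies in every $f^n[X]$. You also make explicit the singleton step that the paper leaves implicit when it passes from $f(x)\in\bigcap_n f^n[X]$ to $f(x)=x$.

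One caution about your aside: for a closed but non-continuous $f$, the fiber $f^{-1}(z)$ need not be closed even in a $T_1$ space. In the paper's example with $X=\{0,2,3\}\cup\{\frac{1}{n+1}:n\in\w\}$, the set $f^{-1}(2)$ accumulates at $0$. As you say, though, that remark plays no role in the proof.
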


One can weak the notion of topological contraction by changing  any open cover to the two element open cover $\cu = \{\{x\}^c, \{y\}^c\}$ for some fixed distinct $x,y$ in our space. The above theorem remains true in such open cover case. 

Our aim in this paper is isolate a class of $T_1$ topological spaces such that some kind contraction fixed point theorem is valid.

\section{Kupka fixed-point theorem}\label{Kupka-section}
In the Introduction, we mentioned the Kupka Fixed-Point Theorem  \ref{Kupka-function-thm} related to feeble contraction. The strongest version 
is connected with feeble contractive multifunctions. Now we recall this notion in topological spaces.

The proof of this Theorem is short and elegant, as we show.

In full generality, we can replace $f:X\to X$ with $F:X\to P(X)\setminus\{\emptyset\}.$ Function $F$ that we will call multifunction on $X.$ A feeble topological contractive map $f:X\to X$ has to be changed. For fixed $F:X\to P(X)\setminus\{\emptyset\}$, we can define $\hat{F}:P(X)\to P(X)$ such that for every nonempty $A\in P(X)$ set,
$$
\hat{F}(A) = \bigcup F[A] = \bigcup\{ F(x): x\in A\}.
$$
Let us observe that for every nonempty $A,B\in P(X)$, if $A\se B$ then $\hat{F}(A) \se \hat{F}(B).$ Moreover, if $x\in X$, then $\hat{F}(\{x\}) = F(x).$ 

\begin{definition}[Feeble contractive multifunction]\cite[Definition 3]{K} Let $X$ be a fixed topological space. Then  $F:X\to P(X)\setminus\{\emptyset\}$ is feeble topological contractive multifunction on $X$ if for every open cover $\cu$ of $X$, we have
$$
(\forall x, y\in X)(\exists U\in\cu)(\exists n\in\w)\; (\hat{F}^n(\{x\})\se U\land \hat{F}^n(\{y\})\cap U\ne\emptyset).
$$                                                      
\end{definition}

A multifunction $F:X\to P(X)$ has a closed graph if the set
$$
\{(x,y)\in X\times X: \; y\in F(x)\}
$$
is closed in $X\times X.$

\begin{theorem}[Kupka]\cite[Theorem 2]{K} If $X$ is a topological space $T_0$, and $F:X\to P(X)\setminus\{\emptyset\}$ is a feeble topological contractive multifunction with a closed graph on $X$, then there exists $x\in X$ such that $x\in F(x)$. Moreover, if $X$ is $T_1$, then such a point is unique and $F(x) = \{x\}$. 
\end{theorem}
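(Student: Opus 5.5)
We argue by contradiction, using the closed graph to manufacture an open cover that the feeble contraction property cannot handle. Suppose no $x\in X$ satisfies $x\in F(x)$. Then for every $x\in X$ and every $y\in F(x)$ we have $x\ne y$, and the closed graph
$$
G=\{(x,y)\in X\times X:\ y\in F(x)\}
$$
misses the diagonal. So for each $x\in X$ the point $(x,x)$ lies in the open set $(X\times X)\setminus G$. Hence there is an open neighbourhood $V_x$ of $x$ with $V_x\times V_x\cap G=\emptyset$, that is, $F(z)\cap V_x=\emptyset$ for every $z\in V_x$. The family $\cu=\{V_x: x\in X\}$ is an open cover of $X$, and it has the following key property:
$$
(\forall U\in\cu)(\forall A\se U,\ A\neq\emptyset)\quad \hat{F}(A)\cap U=\emptyset .
$$

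Next we pick the two points to which we apply the contraction. A natural choice is $x\in X$ arbitrary and $y\in F(x)$. By the definition of feeble contractive multifunction there are $U\in\cu$ and $n\in\w$ with $\hat{F}^n(\{x\})\se U$ and $\hat{F}^n(\{y\})\cap U\neq\emptyset$. Since $y\in F(x)=\hat{F}(\{x\})$, monotonicity of $\hat F$ gives
$$
\hat{F}^n(\{y\})\se \hat{F}^{n}(\hat F(\{x\}))=\hat{F}^{n+1}(\{x\})=\hat{F}(\hat{F}^n(\{x\})).
$$
By the key property applied to the nonempty set $A=\hat{F}^n(\{x\})\se U$, the right-hand side is disjoint from $U$. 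This contradicts $\hat{F}^n(\{y\})\cap U\ne\emptyset$. Hence some $x$ satisfies $x\in F(x)$. The $T_0$ hypothesis plays no visible role in this step; I expect it is only needed in Kupka's formulation or to exclude degenerate cases. I would double-check this point, but the argument above seems not to use it.

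For the second part, assume $X$ is $T_1$ and let $x\in F(x)$. Suppose $z\in F(x)$ with $z\ne x$. Take the two-element open cover $\cu=\{\{x\}^c,\{z\}^c\}$ and apply the contraction to the pair $(x,x)$. This yields $n$ and $U\in\cu$ with $\hat F^n(\{x\})\se U$. But $x\in F(x)$ gives, by induction and monotonicity, $\{x,z\}\se F(x)\se\hat F^n(\{x\})$ for every $n\ge1$, which is not contained in either member of $\cu$. The case $n=0$ needs separate care, since then $\hat F^0(\{x\})=\{x\}\se\{z\}^c$. To handle it, apply the definition instead to the pair $(z,x)$, or note that if the intended reading is $n\ge1$ the problem disappears. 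This bookkeeping about $n=0$ is the main obstacle I anticipate, and I would resolve it by choosing the pair so that the condition $\hat F^n(\{y\})\cap U\neq\emptyset$ rules out $n=0$. Hence $F(x)=\{x\}$.

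Uniqueness follows the same way. If $x\in F(x)$ and $y\in F(y)$ with $x\ne y$, then $F(x)=\{x\}$ and $F(y)=\{y\}$, so $\hat F^n(\{x\})=\{x\}$ and $\hat F^n(\{y\})=\{y\}$ for all $n$. The cover $\{\{x\}^c,\{y\}^c\}$ then gives $U$ containing $x$ and meeting $\{y\}$, that is, $U$ contains both $x$ and $y$, which is impossible.
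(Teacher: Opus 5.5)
Your existence argument is the paper's argument. Your ``key property'' is just $G\cap(U\times U)=\emptyset$ for $U\in\cu$, and the inclusion $\hat F^n(\{y\})\se\hat F(\hat F^n(\{x\}))$ together with $\hat F^n(\{x\})\se U$ is exactly how the paper reaches $U\cap F[U]\neq\emptyset$. You are also right that $T_0$ plays no role there; the paper does not use it either.

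The gap is in the $T_1$ part, at the step you flagged yourself. You correctly see that the pair $(x,x)$ gives no contradiction when $n=0$; the paper's proof passes over this case too. However, the repair you name, the pair $(z,x)$, does not work. In your convention it yields $U\in\{\{x\}^c,\{z\}^c\}$ and $n$ with $\hat F^n(\{z\})\se U$ and $\hat F^n(\{x\})\cap U\neq\emptyset$. This does exclude $n=0$. But for $n\ge 1$ the second condition holds automatically, since $\hat F^n(\{x\})\supseteq\{x,z\}$ and $U$ omits only one point. The first condition concerns the iterates of $z$, about which you know nothing, so no contradiction follows.

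The roles must be swapped: apply the definition to the pair $(x,z)$, i.e.\ $\hat F^n(\{x\})\se U$ and $\hat F^n(\{z\})\cap U\neq\emptyset$.
\begin{itemize}
\item For $n=0$, $\{x\}\se U$ forces $U=\{z\}^c$, which misses $\hat F^0(\{z\})=\{z\}$.
\item For $n\ge 1$, you would need $\{x,z\}\se F(x)\se\hat F^n(\{x\})\se U$, which fails for both members of the cover.
\end{itemize}
Your alternative of reading the definition with $n\ge 1$ would also work, but not with $n\in\w$ as the paper states it. Your uniqueness paragraph, which uses the pair $(x,y)$ in the correct order, is fine as written. With the swap to $(x,z)$, the whole proof coincides with the paper's.
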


\begin{proof}                                                                       
Let us assume that $F$ has no fixed point, i.e., 
$$
(\forall x\in X)\ \{x\}\times F(x) \cap \{ (t,t):\ t\in X \} = \emptyset.
$$
But by assumption, the set
$$
G_F = \{(x,y)\in X\times X: \; y\in F(x)\}
$$
is closed in $X\times X.$ Then we can find a family of open nonempty sets
$$
\cu = \{ U_x:\; x\in X\}
$$
such that $x\in U_x$ for every $x\in X$ and $\bigcup \{ U_x\times U_x: x\in X\}\subseteq G_F^c.$ Now let us choose any $x\in X$ and $y\in F(x)$; then there are $U\in\cu$ and some $n\in \w$ such that 
$$
\hat{F}^n(\{x\})\se U\land \hat{F}^n(\{y\})\cap U\ne\emptyset
$$
Let us observe that $ \hat{F}^n(\{y\})\se \hat{F}^n\circ\hat{F}(\{x\})= \hat{F}\circ \hat{F}^n(\{x\})\se F[U].$ Then $U\cap F[U]\ne\emptyset$, which gives us that
$$
\emptyset\ne G_F\cap (U\times U)\se G_F\cap \bigcup\{U_x\times U_x:\; x\in X\},
$$
leads to a contradiction.

Now, let us go into the last part. Let us assume that our space is $T_1$ and there are distinct $x,y\in X$ such that $\{x,y\}\se F(x).$ Then we can find an open cover $\cu = \{ X\setminus \{x\}, X\setminus \{y\}\}$ of $X.$ But $F$ is a feeble contraction; then for some $n\in \w$, $\hat{F}^n(\{x\})\se X\setminus\{x\}$ or $\hat{F}^n(\{x\})\se X\setminus \{y\}.$ But $\{x,y\} \se \hat{F}^k(\{x\})$ for every positive integer $k,$ leads to a contradiction.

Using a similar argument, we can show that the $x\in X$ such that $F(x) = \{x\}$ is unique.
\end{proof}

\section{Peripherally Hausdorff spaces}
We start this section with the concept of a peripherally Hausdorff space. Peripherally Hausdorff spaces is a proper subclass of the class of $T_1$-spaces. In this sort of space, the fixed-point theorem is fulfilled for continuous maps instead of having the closed graph property. In general cases, those spaces are not Hausdorff. But the rank of ordinal numbers tells us how far we are from being Hausdorff.

We adopt some notations. Namely, for any topological space $(X,\tau)$ and $y\in X$, we denote by $\tau_y=\{U\in\tau:\; y\in U\}$ the set of all neighborhoods of $y$. Sometimes we write $\tau(X),$ instead of $\tau$ to emphasize which space is being considered. For any $A\se X$ by $\clo{A}$ (or  $\clo{A}^{_X}$), we denote the closure of $A$ in space $(X,\tau).$

\begin{definition} Let $(X,\tau)$ be a topological space and $x\in X$, then the set
$$
[x]_{(X,\tau)} = \bigcap \{\clo{U}:\; x \in U \land U \in \tau\} 
$$
we call a class of $x.$ Sometimes we will use shorter notation for the class of $x$ as $[x]$ or $[x]_X.$
\end{definition}

Now let us observe that for every Hausdorff space, we have $[x]_{(X,\tau)} = \{x\}$ for every $x$ of our space.

It is easy to see that the relation $(X,\preceq)$ is reflexive and symmetric but not necessarily transitive, where
$$
(\forall x,y\in X)\; (x\preceq y \iff x \in [y]_{(X,\tau)}).
$$

Now let $(X,\tau)$ be any topological space and $Y \se X.$ Then we define a relative topology on $Y$
$$
\tau\restriction Y = \{U\cap Y: U \in \tau \}.
$$
Then for any $x\in X$ we can consider $[x]_{(X,\tau)}$ as a topological space with relative topology $\tau([x]_{(X,\tau)}) = \tau\restriction ([x]_{(X,\tau)}).$

Now we are ready to introduce peripherally Hausdorff space.

\begin{definition}[Peripherally Hausdorff space] Let $(X,\tau)$ be an $T_1$-topological space. Then for every ordinal number $\alpha \in On$ we define:
\begin{description}
    \item[$\alpha = 0$] $(X,\tau)\in H_0$ iff $X=\{x\}$ ant $\tau=\{\emptyset,X\},$ or
    \item[$0<\alpha$] if $H_\beta$ is defined for each $\beta< \alpha$ then 
    $$
    (X,\tau) \in H_\alpha \iff (\forall x\in X)( \exists \beta < \alpha)\;\;([x]_{X,\tau}\subsetneq X) \land ( ([x]_{X, \tau}, \tau([x]_{X,\tau}) ) \in H_\beta).
    $$
\end{description}
If $(X,\tau) \in H_\alpha$ then we can read this as $\alpha$-Hausdorff space.

We say that topological $T_1$ space $(X,\tau)$ is peripherally space if exists ordinal number $\alpha$ s.t. $(X,\tau) \in H_\alpha$ or shortly if  $(X,\tau)$ is $\alpha$-Hausdorff space for some ordinal $\alpha.$
\end{definition}
Let us observe that for each ordinals $\beta, \alpha,$ if $0<\beta\le \alpha$ then $H_\beta \se H_\alpha.$ If $X$ is a Hausdorff space but not singleton then $X \in H_1$ and if $X\in H_1$ then $X$ is a Hausdorff space.

For every peripherally Hausdorff space $X$ we define $rank(X)$ as least ordinal $\alpha $ s.t. $X\in H_\alpha.$
Singletons has rank $0$ but every peripherally Hausdorff space which contains at least two elements has rank equal to $1.$

Now we present an example space with rank equal to $2.$

\begin{example}[Niemytzki like half-plane]\label{Niemyrzki_half_plane}  Set $X = \bbr\times [0,\infty).$ Topological subbase of $X$ is defined as follows, for points with positive second axis we equipe euclidean neighbourhoods. For points of form $(x,0)$ where $x\in\bbr$ and $\alpha\in (0,\pi/2)$ we define 
$$
U_\alpha(x,0) = \{(r,s)\in X:\; (r,s) = (x,0) \lor (s\ne |x| \land \tan\alpha \cdot |r|< s)\}.
$$
It is easy to see that 
$$
[(x,y)] = \begin{cases}
    \{(x,y)\}\cup \{(x,0)\} & \text{ for } y>0\\
    \bbr\times\{0\} \cup \{x\}\times (0,\infty) & \text{ for } y = 0
\end{cases},
$$
which is $T_2$ space.

Then $X$ is $2$-Hausdorff, locally Hausdorff space but not $T_2.$
\end{example}

\begin{remark} In Niemytzki-like example we we change the open basic set at point $(x,0)$ as follows for any $\alpha\in (0,\pi/2)$ and $r>0$ set
$$
U_\alpha(x,0) = (x-r,x+r)\times \{0\}\cup \{(r,s)\in X:\; (r,s) = (x,0) \lor (y\ne |x| \land \tan\alpha \cdot |r|< y)\}.
$$
Then resulting space is $T_1$ and $2$-Hausdorff space but not locally Hausdorff space.
\end{remark}

\begin{remark} If we consider $\omega$  with topology s.t. every nonempty open set is cofinite then for every $n\in\w$ we have $[n]_\w = \w.$ Thus this space is $T_1$ but not peripheral Hausdorff space.
\end{remark}

The next theorem tells us that the range of rank function is a whole class of all ordinals.

\begin{theorem}\label{H_alpha}
For every $\alpha\in\textnormal{On}$ there exists a peripherally Hausdorff  space $X$ such that
$$\textnormal{rank}(X)=\alpha.$$ 
\end{theorem}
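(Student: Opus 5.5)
We build, by transfinite recursion on $\alpha$, a $T_1$ space $X_\alpha$ with $\textnormal{rank}(X_\alpha)=\alpha$. For $\alpha=0$ we take a singleton. For $\alpha=1$ we take any Hausdorff space with at least two points, e.g.\ a two-point discrete space, since such a space lies in $H_1$ and not in $H_0$.

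For the successor step $\alpha=\beta+1$ with $\beta\ge 1$, we mimic Example~\ref{Niemyrzki_half_plane}. We construct $X_{\beta+1}$ so that it contains a distinguished point $p$ whose class $[p]$ is a proper subset homeomorphic to $X_\beta$, while every other point has a class of rank $<\beta+1$. Concretely, we take $Y=X_\beta\times\{0\}$ together with, for each $y\in Y$, a ``spike'' $S_y$ of isolated or Euclidean points. The topology is arranged as in the Niemytzki-like half-plane: basic neighbourhoods of $(y,0)$ consist of $(y,0)$ together with tails of all spikes, but contain no other points of $Y$. Points of the spikes get small Hausdorff neighbourhoods. Then every closure of a neighbourhood of $(y,0)$ contains all of $Y$ (the accumulation of the spike tails), so $[(y,0)]\supseteq Y$. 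We must also place $Y$ with its original topology so that $[(y,0)]=Y$ with relative topology equal to that of $X_\beta$. To get this, we intersect the spike-tail neighbourhoods with the $X_\beta$-neighbourhoods on the $Y$ part, as in the Remark after Example~\ref{Niemyrzki_half_plane}, taking care not to make the relative topology on $Y$ coarser. For spike points, the class is the point together with its base point $(y,0)$, which is a two-point Hausdorff space of rank $1$. Thus $X_{\beta+1}\in H_{\beta+1}$.

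For $X_{\beta+1}\notin H_{\gamma}$ with $\gamma\le\beta$, we need the class of $(y,0)$ to have rank exactly $\beta$, not just at most $\beta$. This requires computing the classes inside the subspace $[(y,0)]$ and checking they agree with the classes in $X_\beta$. That holds if the relative topology is exactly $\tau(X_\beta)$. We will also use monotonicity $H_\beta\subseteq H_\alpha$ for $0<\beta\le\alpha$.

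For limit $\lambda$, we take the disjoint topological sum $X_\lambda=\bigoplus_{\beta<\lambda}X_\beta$. Classes are computed inside summands, since summands are clopen. Each class is a proper subset, because there are at least two summands, and has rank $<\lambda$, so $X_\lambda\in H_\lambda$. Conversely, if $X_\lambda\in H_\gamma$ with $\gamma<\lambda$, then the witnesses inside $X_{\gamma+1}$ would force $X_{\gamma+1}$ to have classes of rank $<\gamma$, contradicting the choice of $X_{\gamma+1}$. To run this argument rigorously, we prove the auxiliary statement that the rank of $X$ equals $\sup$ over $x$ of $(\textnormal{rank}[x]+1)$ for non-singletons. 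This is done by induction using the definition of $H_\alpha$.

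The main obstacle is the successor construction: keeping the relative topology on the class equal to the given topology of $X_\beta$, and making sure that for $x\in Y$ the class is not larger than $Y$. We must also verify the $T_1$ property. We would first try the product $X_\beta\times[0,\infty)$ with the cone neighbourhoods of Example~\ref{Niemyrzki_half_plane}, intersected with $U\times\{0\}$ for $U$ open in $X_\beta$.
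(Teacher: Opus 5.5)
Your base cases and your limit step agree with the paper's proof and are fine: you take the topological sum, compute classes inside the clopen summands, and witness exactness by a point of $X_{\gamma+1}$. The gap is in the successor step, which is the whole content of the theorem. You never fix a topology there, and the properties you ask for contradict each other.

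\emph{The two requirements are incompatible.} The relation ``$t\in[x]$'' is symmetric: it says that every neighbourhood of $t$ meets every neighbourhood of $x$. So if a spike point $p\in S_y$ had class $\{p,(y,0)\}$, then also $p\in[(y,0)]$. Hence $S_y\subseteq[(y,0)]$ and $[(y,0)]\neq Y$. This is exactly what happens in the Niemytzki-like example, where $[(x,0)]$ contains the whole half-line above $(x,0)$.

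\emph{The discrepancy breaks the rank computation.} Inside the class, take two base points $(y',0),(y'',0)$ with $y',y''\neq y$. Their relative neighbourhoods both contain a tail of $S_y$, so these points cannot be separated there. The class is therefore no longer a copy of $X_\beta$. If $X_\beta$ is Hausdorff with at least three points, the class is not even Hausdorff, so the new space does not have rank $2$.

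\emph{The cone neighbourhoods are undefined.} The condition $\tan\alpha\cdot|r|<s$ needs a real coordinate on the base. But $X_\beta$ is an arbitrary $T_1$ space, not even Hausdorff once $\beta\ge 2$, so ``$X_\beta\times[0,\infty)$ with cone neighbourhoods'' makes no sense. The checks you postpone as ``the main obstacle'' are the class of a base point being exactly $Y$, the relative topology being exactly $\tau(X_\beta)$, and $T_1$. These are not merely unverified: in the Niemytzki-like form they fail.

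The repair is to keep only the ``tails'' part of your idea and never let neighbourhoods of old points accumulate at an individual new point. The paper does this as follows, with $X=X_\beta$.
It adjoins new points $y_0,y_1,y_2,\ldots$ and makes each $y_i$ with $i\ge 1$ isolated. It gives $y_0$ the basic neighbourhoods $\{y_0\}\cup A$, and each $x\in X$ the basic neighbourhoods $U\cup A$. Here $U$ is an open neighbourhood of $x$ in $X$ and $A$ is cofinite in $\{y_1,y_2,\ldots\}$.
Then $T_1$ is immediate. Any two of these neighbourhoods intersect, since they share a cofinite subset of $\{y_1,y_2,\ldots\}$, so $X\cup\{y_0\}\subseteq[x]$. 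Each $y_i$ with $i\ge1$ is clopen, so $[x]=[y_0]=X\cup\{y_0\}$ and $[y_i]=\{y_i\}$. Finally $(U\cup A)\cap(X\cup\{y_0\})=U$, so the class is the topological sum of $X$ and the clopen point $y_0$, whose rank equals that of $X$.

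Your spike picture also works once made precise in this spirit. Let the spikes be disjoint open half-lines, and give each base point the neighbourhoods $(U\times\{0\})\cup\bigcup_{y'\in X_\beta}\{y'\}\times(N_{y'},\infty)$. But then the spike points have singleton classes, not two-point ones.
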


\begin{proof} The proof goes by induction with respect to $\alpha\in\textrm{On}$. For $\alpha=0,1$  the conclusion is obvious.

Now let us assume that the conclusion of the theorem holds for some $\alpha\geq1$. Let $\textnormal{rank}(X)=\alpha$ for some topological space $X$. Let $\{y_0, y_1,y_2\ldots\}\cap X=\emptyset$ and $y_i\neq y_j$ for $i\neq j$. Let $Y=X\cup\{y_i:i\in\mathbb{N}_0\}.$ Let us define a topology on $Y$ in the following way: let a local base at any point $x\in X$ consist of the sets of the form $U\cup A$ where $A$ is any co-finite subset of $\{y_1,y_2,\ldots\}$ and $U$ is an open neighbourhood of $x$ in $X$. Let a local neighbourhood base of $y_0$ consist of sets $A\cup\{y_0\}$ where again  $A$ is any co-finite subset of $\{y_1,y_2,\ldots\}$.
Let singletons $\{y_1\}, \{y_2\},\ldots$ be open sets.  For $x\in X$:
$$[x]_Y=X\cup\{y_0\}$$
and 
$$[y_0]_Y=X\cup\{y_0\},$$
$$[y_i]_Y=\{y_i\},$$
$i=1,2,\ldots.$
As in the space $X\cup\{y_0\}$ the singleton $\{y_0\}$ is clopen the Hausdorff rank of $[x]_Y$ is equal to the Hausdorff rank of $X$, namely to $\alpha$. Hence 
$$\textrm{rank}(Y)=\alpha+1.$$

Let now $\gamma$ be a limit ordinal and we assume that for all $\alpha<\gamma$ there exists a space $X_\alpha$ whose Hausdorff rank is equal to $\alpha$. Then let $Y$ be a topological sum of the spaces $X_\alpha$, $\alpha<\gamma$. Then for $y\in X_\alpha$ we have
$$[y]_Y=[y]_{X_\alpha}.$$ Hence 
$$\textrm{rank}(Y)=\gamma.$$   

\end{proof}

The notion of peripherally Hausdorff, more exactly that of $\alpha$-Hausdorff, is hereditary. Namely the following theorem holds.
\begin{lemma} If $(X,\tau(X))$ is a peripherally Hausdorff space and $Y\se X$ is nonempty then $(Y,\tau(Y))$ is a peripherally Hausdorff space also, where $\tau(Y) = \tau(X\restriction Y) = \{ U\cap Y: \ U\in \tau(X) \}.$ Moreover, if $(X,\tau(X)) \in H_\alpha$ for some ordinal number then $(Y,\tau(Y)) \in H_\alpha$ also.
\end{lemma}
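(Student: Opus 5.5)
The plan is to prove, by transfinite induction on $\alpha$, the slightly more flexible statement
\[
(\ast)_\alpha:\quad \text{if } (X,\tau(X))\in H_\alpha \text{ and } \emptyset\neq Y\se X,\ \text{then } (Y,\tau(Y))\in H_\gamma \text{ for some } \gamma\le\alpha .
\]
The exact "moreover" part then follows from the monotonicity $H_\beta\se H_\alpha$ for $0<\beta\le\alpha$ observed after the definition. One caveat: when $Y$ is a singleton we only get $Y\in H_0$. A singleton is never in $H_\alpha$ for $\alpha>0$, because $[y]_Y=Y$ is not a proper subset. So the lemma has to be read with this trivial exception, and $(\ast)_\alpha$ is the form I would actually prove. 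The case $\alpha=0$ is immediate, since then $Y=X$ is a singleton.

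The key topological fact is a comparison of classes. For $y\in Y$ and $U\in\tau(X)$ with $y\in U$ we have
\[
\clo{U\cap Y}^{_Y}=\clo{U\cap Y}^{_X}\cap Y\se \clo{U}^{_X}\cap Y .
\]
Every neighbourhood of $y$ in $Y$ has the form $U\cap Y$. Intersecting over all such $U$ therefore gives $[y]_Y\se [y]_X\cap Y$. Moreover, the subspace topology on $[y]_Y$ induced from $Y$ coincides with the one induced from the space $[y]_X$. Hence $[y]_Y$ is a subspace of $([y]_X,\tau([y]_X))$.

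For the inductive step, let $\alpha>0$, $X\in H_\alpha$, and fix $y\in Y$. By definition there is $\beta<\alpha$ with $[y]_X\subsetneq X$ and $[y]_X\in H_\beta$. By the fact above, $[y]_Y$ is a nonempty subspace of an $H_\beta$-space. Applying $(\ast)_\beta$ gives $[y]_Y\in H_{\gamma_y}$ with $\gamma_y\le\beta<\alpha$. If $[y]_Y\subsetneq Y$ holds for every $y\in Y$, then $Y\in H_\alpha$ directly from the definition, with witnesses $\gamma_y$.

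The main obstacle is the properness condition $[y]_Y\subsetneq Y$, which can fail even though $[y]_X\subsetneq X$. I would handle it by a case split. Suppose $[y]_Y=Y$ for some $y$. Then $Y\se[y]_X$, so $Y$ is itself a nonempty subspace of the space $[y]_X\in H_\beta$ with $\beta<\alpha$. Applying $(\ast)_\beta$ directly to $Y$ yields $Y\in H_\gamma$ for some $\gamma\le\beta<\alpha$, which is what $(\ast)_\alpha$ requires. This completes the induction. It also shows that "peripherally Hausdorff" is hereditary, since $T_1$ passes to subspaces.
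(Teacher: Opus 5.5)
Your proposal is correct and follows the same route as the paper: induction on $\alpha$, the inclusion $[y]_Y\se[y]_X$, the agreement of the topologies that $[y]_Y$ inherits from $Y$ and from $[y]_X$, and the induction hypothesis applied to $[y]_Y\se[y]_X$. It is more careful than the paper's argument, which never verifies the properness condition $[y]_Y\subsetneq Y$ and concludes $Y\in H_\alpha$ even for one-point $Y$; your weakened statement $(\ast)_\alpha$, your case split, and your singleton caveat repair exactly these points (the case split in fact shows that $[y]_Y=Y$ forces $Y$ to be a singleton).
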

\begin{proof} Induction over $On.$ If $X\in H_0$ and $\emptyset\ne Y\se X$ then $Y = X =\{x\}$ for some $x.$ Let $\alpha >0$ and for every $\beta < \alpha$ Lemma is true.
		Let $Y\se X$ and $X\in H_\alpha.$ If we show that for every $x\in Y$ $[x]_Y \in \mathcal{F}_\beta$ for some $\beta< \alpha$ then $Y\in H_\alpha$ and so $Y$ is a peripherally Hausdorff space.
		
		 Let us choose any $x\in Y.$ Let us recall that
		 $$
		 [x]_X = \bigcap\{ \clo{U}^{_X}:\; x \in U\in \tau(X)\},
		 $$
		 and
		 $$
		 [x]_Y = \bigcap\{ \clo{(U\cap Y)}^{_Y}:\; x \in U\in \tau(X)\}.
		 $$
		 \begin{claim}\label{funy clases inclusion}
		 	$[x]_Y\se [x]_X.$
		 \end{claim}
		 \begin{proof}[Proof of Claim] Let $t\in [x]_Y$, $U,V\in\tau(X)$ s.t. $x\in U,$ and $t\in V$. Then $t \in V\cap Y \in \tau(X\restriction Y).$ Because $t\in \clo{U\cap Y}^{_Y}$ then $(V\cap Y) \cap (U\cap Y) \ne \emptyset.$ Then $V\cap U\ne \emptyset$ for arbitrary $V\in \tau(X)$ s.t. $t \in V.$ Then $t \in \clo{U}^{_X}$ for any $U\in \tau(X)$ with $x\in U.$ Finally, we have that $t\in [x]_X.$
\end{proof}
	 
\begin{claim}\label{funy clases topology}
	 	$$
	 	\tau([x]_X)\restriction [x]_Y = \tau([x]_Y),
	 	$$
	 	where 
	 	$$
	 	\tau([x]_X) = \{ U\cap [x]_X: U\in \tau(X)\}, \ \tau([x]_Y) = \{ U\cap [x]_Y: U\in \tau(Y)\},\; \tau(Y) = \tau(X)\restriction Y.
	 	$$
	 \end{claim}
	 \begin{proof}[Proof of Claim] For any $V$ we have
	 	\begin{align*}
	 		V \in \tau([x]_X)\restriction [x]_Y \iff & (\exists U_0\in \tau([x]_X))\; (V = U_0\cap [x]_Y) \\
	 		\iff & (\exists U_1\in \tau(X))\; (U_0 = U_1 \cap [x]_X\land V = U_0\cap [x]_Y) \\
	 		\iff & (\exists U_1\in \tau(X))\; (V =( U_1 \cap [x]_X)\cap [x]_Y )\\
	 		\iff & (\exists U_1\in \tau(X))\; (V = U_1 \cap [x]_Y)\\
	 		\iff & (\exists U_1\in \tau(X))\; (V = (U_1 \cap Y) \cap [x]_Y \iff V \in \tau([x]_Y)).
	 	\end{align*}
	 \end{proof}
		But $[x]_X \in H_\beta$ for some $\beta<\alpha$ by assumption that $X\in H_\alpha.$ By the Claim \ref{funy clases inclusion} and Claim \ref{funy clases topology}  and induction assumption we have that $[x]_Y\in H_\beta$ for some $\beta<\alpha.$ We have shown that for every $x\in Y$ there exists $\beta < \alpha$ such that $[x]_Y\in H_\beta$. Then we conclude that $Y\in H_\alpha$ also.
\end{proof}

\subsection{Fixed point theorem}

In Kupka Theorem a feebly topological contraction was used. Moreover, in his theorem an important assumption is that our feebly contraction has to closed graph. In Hausdorff spaces  continuity implies this condition. This is not  true in the case of topological space is $T_1$ in general. Thus we want to prove the fixed point theorem for continuous maps defined on peripherally Hausdorff spaces. To do we need a little stronger condition for being contraction.

\begin{definition}[feebly+ contraction] Let $(X,\tau)$ be a topological space, and $f:X\to X.$ We say that $f$ is a feebly+ topological contraction iff 
$$
(\forall \cu)(\forall x,y\in X)\; (\cu \text{ open cover of } X )\then (\exists U\in \cu) (\forall^\infty n\in\w)\; (f^n[\{x,y\}] \in U).
$$
Here $f^n = \underbrace{f\circ\ldots\circ f}_n.$
\end{definition}
Every feebly+ topological contraction is feebly topological contraction, of course, but we do not know whether the two notions are different. Observe that for any Banach space every Lipschitz contraction with a constant $c\in [0,1)$ is a weakly+ topological contraction.

Now we give an example which shows that continuity of function does not implies closedness of its graphs even if our function is a feebly+ topological contraction.
\begin{example}\label{no_closed_graph} Let $X = \{-1\}\cup [0,1].$ Let the topology of the interval $[0,1]$ be euclidean. For positive real $r>0$ set
$$
U(-1,r) = (\{-1\}\cup (0,r))\cap X
$$
as a basic open neighborhood of the $-1.$ Here for every positive $x\in X$ we have $[x] = \{x\}$ but $[-1] = \{-1,0\} = [0]$ with discreete topology. Then $X$ is peripherally Hausdorff but not Hausdorff space. Now, let us define $f:X\to X$ as follows:
$$f(x) = 
\begin{cases}
    0   & x = -1,\\
    \frac{1}{2}\cdot x  & x\in [0,1]_.
\end{cases}
$$
Let observe that point $(-1,-1)\notin f$ but for every open set containg $(-1,-1)$ contains set $\{(-1,-1)\}\cup (0,r)\times(0,r)$ for some positive $r,$ which intersect $f.$ Then $f$ is not a closed subset of $X^2.$ It is easy to show that $f$ is continuous feebly+ topological contraction. Moreover, we have $0 = f(0).$
\end{example}

Now we formulate the following fixed point theorem for peripherally Hausdorff spaces. 

\begin{theorem}\cite[Theorem 7.7]{MR1}\label{funy fixed point} For every peripherally Hausdorff space $X,$ every continuous feeble$+$ topological contraction on $X$ has a unique fixed-point.
\end{theorem}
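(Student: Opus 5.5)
The plan is to argue uniqueness directly from the $T_1$ axiom, and existence by contradiction, following the proof of the Kupka theorem in Section~\ref{Kupka-section}. The closed-graph hypothesis will be replaced by continuity together with the structure of the classes $[x]$, with an induction on $\textnormal{rank}(X)$ for the hard case.

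\emph{Uniqueness.} Suppose $a\ne b$ are both fixed by $f$. Since $X$ is $T_1$, $\cu=\{X\setminus\{a\},X\setminus\{b\}\}$ is an open cover. Applying the feebly$+$ condition to the pair $a,b$ gives one $U\in\cu$ with $f^n[\{a,b\}]=\{a,b\}\se U$ for almost all $n$. This is impossible, since neither member of $\cu$ contains both $a$ and $b$. (Only the weaker feeble property is used here.)

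\emph{Existence, reduction to a separating cover.} Suppose $f$ has no fixed point. As in Kupka's proof, it suffices to produce an open cover $\cu$ of $X$ such that $U\cap f[U]=\emptyset$ for every $U\in\cu$. Indeed, take any $x$ and put $y=f(x)$. The feebly$+$ property gives $U\in\cu$ with $f^n(x),f^{n+1}(x)\in U$ for some (in fact almost all) $n$. Then $f^{n+1}(x)\in U\cap f[U]$, a contradiction.

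\emph{Good points.} Call $x$ \emph{good} if $f(x)\notin[x]$. For such $x$ there is an open $U\ni x$ with $f(x)\notin\clo{U}$, hence an open $V\ni f(x)$ with $U\cap V=\emptyset$. Replacing $U$ by $U_x=U\cap f^{-1}[V]$, which is open by continuity, gives $U_x\cap f[U_x]\se U\cap V=\emptyset$. So good points are handled by continuity alone. In particular, if $\textnormal{rank}(X)\le1$, i.e.\ $X$ is Hausdorff, every point is good and we are done. (Alternatively, continuity implies a closed graph in that case and Kupka's theorem applies, since feebly$+$ implies feeble.)

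\emph{Bad points: the main obstacle.} For a bad point $x$ we have $f(x)\in[x]\setminus\{x\}$, and $[x]\subsetneq X$ lies in $H_\beta$ for some $\beta<\textnormal{rank}(X)$. I would run a transfinite induction on $\alpha$ with the strengthened statement:
\begin{quote}
for $X\in H_\alpha$ and continuous $f:X\to X$, every point $x$ with $f(x)\ne x$ has an open neighbourhood $U$ in $X$ with $U\cap f[U]=\emptyset$.
\end{quote}
The hereditary Lemma shows that the subspace $[x]$ (closed, being an intersection of closures) is again peripherally Hausdorff of smaller rank. Inside $[x]$ the points $x$ and $f(x)$ are distinct, and since $[x]\in H_\beta$ they are "less inseparable" there. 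The first attempt is to separate $x$ from $f(x)$ using classes computed in $[x]$ (Claims~\ref{funy clases inclusion} and~\ref{funy clases topology} relate $[\cdot]_{[x]}$ to $[\cdot]_X$). One would then lift the resulting relatively open sets back to open sets of $X$, using continuity of $f$ to shrink as in the good case.

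The delicate point is exactly this lifting. A neighbourhood that works inside $[x]$ need not work in $X$, because $f$ can map points outside $[x]$ into it. If the lifting fails in general, the fallback is different. First use the feebly$+$ property with covers built from good-point neighbourhoods to show that some orbit eventually stays inside a single class $[z]$ with $f[[z]]\se[z]$. Then restrict $f$ to $[z]$. The restriction is continuous, and it is feebly$+$ on $[z]$: extend a cover of $[z]$ by adding the open set $X\setminus[z]$, and note that orbits starting in $[z]$ never enter that added set. Applying the induction hypothesis to $[z]\in H_\beta$ then yields a fixed point.
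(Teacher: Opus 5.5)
\textbf{There is a genuine gap.} Your uniqueness argument and your Hausdorff case are fine. However, the strengthened induction statement you propose for bad points is false, so the main route cannot be completed.

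Take the space of Example~\ref{no_closed_graph}, $X=\{-1\}\cup[0,1]\in H_2$, with the continuous map $f(-1)=0$, $f(t)=t/2$. Then $f(-1)\neq -1$. Yet every open $U\ni -1$ contains some $\{-1\}\cup(0,r)$, so $f[U]\supseteq(0,r/2)$ and $U\cap f[U]\neq\emptyset$. Changing $f(0)$ to $-1$ gives a continuous map with no fixed point and the same defect at $-1$. So even adding ``$f$ has no fixed point'' does not rescue your local statement. At such points no lifting of a separation from $[x]$ to $X$ can exist. The feeble$+$ hypothesis has to be used globally, to locate a suitable point, and not only in the final Kupka-style step.

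Your fallback has the right final shape. It is essentially the paper's last claim: restrict to an invariant set inside a class of smaller rank, extend covers by its complement, and use the hereditary lemma and induction. But both of its substantive steps are missing or wrong.

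First, you give no argument that some orbit lies inside a class, and ``covers built from good-point neighbourhoods'' do not supply one. The paper's mechanism has two parts:
(i) Applying feeble$+$ to a hypothetical cover $\{U_y:y\in X\}$ yields a point $y$ such that every open $U\ni y$ contains a point whose whole forward orbit stays in $U$.
(ii) Continuity of the iterates then forces $f^n(y)\in[y]$ for all $n$. Indeed, if $f^n(y)\notin\clo U$ for some open $U\ni y$, then $V=U\cap(f^n)^{-1}[X\setminus\clo U]$ is an open neighbourhood of $y$. A point $x\in V$ whose orbit stays in $V$ then gives $f^n(x)\in U\cap(X\setminus\clo U)=\emptyset$, a contradiction.

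Second, the invariance $f[[z]]\se[z]$ you assert does not follow. Continuity only yields $f[[z]]\se[f(z)]$. Because $\preceq$ is not transitive, $f(z)\in[z]$ does not give $[f(z)]\se[z]$.

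The paper avoids this by restricting $f$ to $W=\clo{\{f^n(y):n\in\w\}}$. This set is $f$-invariant by continuity and lies in $[y]$ because $[y]$ is closed. By the hereditary lemma, $W\in H_\beta$ for some $\beta<\textnormal{rank}(X)$. Your cover-extension argument then works with $X\setminus W$ in place of $X\setminus[z]$.
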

\begin{proof} This proof goes by transfinite induction over the class of all ordinal numbers. If $\alpha = 0$ then $\alpha$-Hausdorff space is a singleton and theorem follows.

Now let $\alpha>0$ be a nonzero ordinal number, and for every $\beta <\alpha$ theorem, it is true for each $\beta$-Hausdorff space.

We have the following Claim.
\begin{claim}\label{claim-1-alfa} There is $y\in X$ such that for every open $U$ there exists $x\in U$ such that $\{f^n(x):n\in\w\}\se U.$
\end{claim}
    \begin{proof}[Proof of Claim] If not, then for every $y\in X$ there exists an open $U_y$ such that $y\in U_y$ and
			$$
			(\forall x\in U_y)(\exists n\in\w)\ (f^n(x)\notin U).
			$$
			But 
			$$
			\{ U_y: y\in X\}
			$$
			forms an open cover of $X.$ Set $u\in X.$ Because $f$ is a contraction, then there exists $n_0$ and $y\in X$ such that 
			$$
			(\forall n>n_0)\ \{f^n(u),f^{n+1}(u)\} \se U_y.
			$$
			Set $x = f^{n_0+1}(u)$ then $\{f^n(x):n\in\w\} \se U_y,$ contradiction.
    \end{proof}
		By the above Claim \ref{claim-1-alfa} find $y\in X$ s.t.
		$$
		(\forall U\in\tau)\; (y\in U\then (\exists x\in X)\; (\{f^n(x):n\in\w\} \se U)).
		$$
		
\begin{claim}\label{claim-2-alfa} We have that for every $n\in \w$ $f^n(y) \in [y]$.
\end{claim}
    \begin{proof}[Proof of Claim] Let observe that $f^0(y) = y  \in [y].$ Assume that $n\in\w$ is a smallest s.t. $f^n(y)\notin [y].$ Let $z = f^n(y)$ then for some open $U$ s.t. $y\in U$ $z\notin \clo{U}.$ Then by the continuity of map $f^n$ there is an open $V\se U$ containing $y$ s.t. $z\in f^n[V]\se \clo{U}^c.$ But there is $x\in X$ s.t.
			$$
			\{ f^m(x): m\in\w\} \se V.
			$$
			Then $f^n(x) \in V\se U$ and $f^n(x) \in f^n[V] \se \clo{U}^c$ what is impossible.
    \end{proof}
		
\begin{claim}\label{claim-3-alfa} $\exists ! x\; x = f(x).$
\end{claim}
    \begin{proof}[Proof of Claim] Let us consider set $W = \clo{\{f^n(y):\;n\in\w\}}.$ Then $W\se [y]$ and by the continuity of $f,$ $f[W] \se W.$ Moreover, $f\restriction W$ is a feeble$+$ topological contraction on $W.$ To see this let $a,b\in W$ and $\mathcal{F}$ be open cover of $W.$ Then there exist family $\cu\se \tau$ such that 
			$$
			\mathcal{F} = \{ U\cap W: U\in \cu\}
			$$
			and then 
			$$
			\cu^+ = \{ W^c\} \cup \cu
			$$
			forms an open cover of $X.$ Then there exists $U\in \cu^+$ and $n_0\in\w$ s.t. if $n>n_0$ then $f^n[\{ a,b\}]\in U.$ But $f[W]\se W$ and $a,b\in W$ then $U\in \cu$ and then
			$$
			(\forall n\in\w)\; (n> n_0 \then f^n[\{a,b\}] \se U\cap W \in \mathcal{F}).
			$$
			But by assumption $[y]$ is $\beta$-Hausdorff space for some $\beta<\alpha$ and $W\se [y].$ Then by the above Lemma \ref{funy clases inclusion} $W$ is a $\beta$-Hausdorff space also. Then aplying Claims \ref{claim-1-alfa} and \ref{claim-2-alfa} to the continuous contraction $f\restriction W$ on space the $W$ there is a  $x\in W$ such that $x=f(x).$
			
			Now assume that for distinct $x,y\in X$ we have 
			$$
			f(x) = x \ne y = f(x).
			$$
			Then $\cu = \{ \{x\}^c, \{y\}^c \}$ is open cover of $X.$ We known that $f$ is a contraction then
			$$
			(\forall^\infty n)\; (\{ f^n(x),f^n(y)\} \se \{x\}^c \lor \{ f^n(x),f^n(y)\} \se \{y\}^c).
			$$
			Finally we have
			$$
			\{ x,y\} \se \{x\}^c \lor \{ x,y\} \se \{x\}^c,
			$$
			what is impossible. Then there exists a unique $x\in X$ s.t. $x = f(x).$
    \end{proof}
    We have completed this proof.
\end{proof}
	
\subsection{Products of peripherally Hausdorff spaces.}
Tychonoff Theorem is a milestone in topology. Now we consider products of peripherally Hausdorff spaces. We show that arbitrary topological products of commonly finite rank of peripherally Hausdorff spaces is peripherally Hausdorff space also. Theorem do not remain true if we drop the common finite bound of ranks of countable family of peripherally Hausdorff spaces.

\begin{theorem}\label{product_of_spaces} Let $\ch$ be nonempty family of peripherally Hausdorff spaces s.t. exists $N\in \w$ s.t.
for every $X\in\ch$ $rank(X) \le N$ Then $W=\prod_{X\in\ch} X$ with product topology is peripherally Hausdorff and
$$
rank(W) = \max\{ rank(X): X\in\ch\}.
$$
\end{theorem}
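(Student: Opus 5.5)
The plan is to reduce everything to one structural identity: the class of a point in a product is the product of the classes of its coordinates. With that identity, an induction on the finite bound $N$ gives the upper bound on $rank(W)$. The hereditary Lemma above gives the lower bound.

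First I would verify that $W$ is $T_1$, which is standard. Then I would prove that for every $x=(x_X)_{X\in\ch}\in W$
$$
[x]_W=\prod_{X\in\ch}[x_X]_X .
$$
For ``$\se$'', fix a coordinate $X_0$ and an open $U\ni x_{X_0}$ in $X_0$. The cylinder over $U$ is a neighbourhood of $x$, and its closure is the cylinder over $\clo{U}$. So every $t\in[x]_W$ satisfies $t_{X_0}\in\clo{U}$, hence $t_{X_0}\in[x_{X_0}]_{X_0}$. For ``$\supseteq$'', it suffices to intersect over basic neighbourhoods $\prod U_X$. The closure of such a box is $\prod\clo{U_X}$, and a point whose coordinates all lie in the classes lies in every such product of closures.

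Next I would show that the subspace topology that $[x]_W$ inherits from $W$ equals the product of the subspace topologies $\tau([x_X]_X)$. This is the standard fact that a product of subspaces is a subspace of the product, and it is a routine computation in the style of Claim \ref{funy clases topology}.

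The upper bound comes from induction on $n\in\w$, proving: if every $X\in\ch$ satisfies $rank(X)\le n$, then $W\in H_{n'}$, where $n'=\max\{rank(X):X\in\ch\}$. If $n'=0$, all factors are singletons and so is $W$. Now suppose $n'\ge1$ and fix $x\in W$. For each factor $X$, the class $[x_X]_X$ satisfies one of the following.
\begin{itemize}
\item It is a singleton (rank $0$). This happens if $X$ itself is a singleton, or if $X\in H_{rank(X)}$ and some witness $\beta$ equals $0$.
\item It lies in $H_\beta$ for some $0<\beta<rank(X)\le n'$. By the monotonicity $H_\beta\se H_{n'-1}$ for $0<\beta$, it then has rank $\le n'-1$.
\end{itemize}
Using the identity and the subspace remark, the induction hypothesis applied to the family $\{[x_X]_X\}$ gives that $[x]_W$ lies in $H_\gamma$ for some $\gamma\le n'-1<n'$. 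The finiteness of $n'$ is exactly what makes $n'-1$ a legitimate predecessor and keeps this induction finite.

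Properness of $[x]_W$ follows by choosing a factor $X_1$ of rank $\ge1$. Its class $[x_{X_1}]_{X_1}$ is a proper subset of $X_1$, so the product of classes is a proper subset of $W$. Hence $W\in H_{n'}$ and $rank(W)\le n'$.

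For the lower bound, each $X\in\ch$ embeds homeomorphically into $W$ as a slice, obtained by fixing all other coordinates. The hereditary Lemma then gives $rank(X)\le rank(W)$, so $rank(W)=\max\{rank(X):X\in\ch\}$.

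I expect the main obstacle to be the bookkeeping in the inductive step. The definition of $H_\alpha$ only gives monotonicity for $\beta>0$, so the singleton classes of rank $0$ must be handled separately. One must also check that a mixture of factors whose classes are singletons and factors whose classes are non-singletons still lands in $H_{n'-1}$ via the induction hypothesis. The identity $[x]_W=\prod[x_X]_X$ itself should be straightforward once one notes that closures of boxes are boxes of closures.
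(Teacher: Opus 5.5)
Your proposal is correct and follows the same route as the paper: first prove $[x]_W=\prod_{X\in\ch}[x_X]_X$, then induct on the maximal rank by applying the inductive hypothesis to the family of classes. Your version is actually more complete than the paper's own argument. The paper never checks that the subspace topology on $[x]_W$ is the product topology, never checks that $[x]_W\subsetneq W$, and concludes only $rank(W)\le N$. Your slice embedding together with the hereditary Lemma supplies the missing inequality $rank(W)\ge\max\{rank(X):X\in\ch\}$. You only need the consequence $rank(Y)\le rank(X)$ of that Lemma, which is true even though its ``moreover'' clause fails literally for singleton subspaces.
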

\begin{proof} We prove this theorem by induction with respect to $rank(\ch)$ as $\max\{rank(X): X\in \ch\}.$ If $rank(\ch) = 0$ then each member of $\ch$ is a singleton then $W = \prod \ch  = \prod_{X\in\ch} X$ is also singleton and thus $W$ is $0$-Hausdorff space. 

Now let $N = rank(\ch) > 0.$ Firstly, we show the Claim.
\begin{claim}
For any $x\in W$ we have
$$
[x]_W = \prod_{X\in\ch} [\pi_X(x)]_X.
$$
\end{claim}
\begin{proof} To do, let $y$ be arbitrary s.t. $y \in [x]_W.$ then for every two open sets $U,V\in \tau(W)$ s.t. $x\in V$ and $y\in U$ $U\cap V \ne \emptyset.$ Fix any $X\in\ch$ and choose any open sets $U_X,V_X\in \tau(X)$ s.t. $\pi_X(x)\in V_X$ and $\pi_X(y)\in U_X$. Then $\pi_X^{-1}[U_x]$ is a open set containing $y$ and $\pi_X^{-1}[V]$ is open set in $W$ containing $x.$ Then $\pi_X^{-1}[U_x]\cap \pi_X^{-1}[V] \ne \emptyset.$ Then $U_X\cap V_Y \ne \emptyset$. But $X$ and $U_X,V_x$ are chosen in arbitrary way. Then we have established that
$$
(\forall X\in\ch)\; (\pi_X(y) \in [\pi_X(x)]_X).
$$
Then $y\in \prod_{X\in\ch} [\pi_X(x)]_X.$

Now let $y\in \prod_{X\in\ch} [\pi_X(x)]_X.$ We have to show that $y\in [x]_W.$ Let us consider two basic open sets $U,V \in \tau(W)$ s.t. $x\in V$ and $y\in Y.$ Then there are two finite families $\cf_x,\cf_y\in [\ch]^{<\w}$ and
$$
\cu = \{U_X: U_X\in \tau(X) \land  X\in \cf_y\},\;\; \cv = \{V_X: V_X\in \tau(X)  X\in \cf_x\}
$$
s.t. 
$$
\forall t\; t\in U \iff \forall X\in \cf_y\; \pi_X(t) \in U_X\;\;\land\;\;
\forall t\; t\in V \iff \forall X\in \cf_x \; \pi_X(t) \in U_X.
$$ 
To finish we have to show that $U\cap V \ne \emptyset,$ i.e. for each $X\in\ch$ $\pi_X[U] \cap \pi_X[V] \ne \emptyset.$ Let $X\in \ch$ then if $X\notin \cf_x\cup\cf_y$ then $\pi_X[U] = \pi_X[V] = X = \pi_X[U] \cap \pi_X[V]\ne \emptyset.$ If $X\in \cf_x\setminus\cf_y$ then $\pi_X[U] \cap \pi_X[V] = \pi_X[V] = V_X\ne \emptyset.$ Analogously for $X\in \cf_y\setminus\cf_x$ we have $\pi_X[U] \cap \pi_X[V] = \pi_X[U] = U_X\ne\emptyset.$ Finally, If $X\in\cf_x\cap\cf_y$ we have
$$
\pi_X[U] \cap \pi_X[V] = U_X \cap  V_X\ne \emptyset.
$$
The intersection $U_X\cap V_X$ is non empty because $\pi_X(y)\in [\pi_X(x)],$ $\pi_X(y) \in U_x$ and $\pi_X(x) \in V_X.$
    
\end{proof} 

Because each $X\in\ch$ has $rank(X_ \le N.$ Then for each $x\in W$ and for each $X\in \ch$ $rank([\pi_X(x)]_X) < N.$ Then by the above Claim we have
$$
rank([x]_W) = rank(\prod_{X\in\ch} [\pi_X(x))=\max\{ rank(\pi_X(x)):\; X\in\ch\}< N.
$$
But $x\in W$ was chosen in arbitrary way then $rank(W) \le N.$ Proof of this theorem is completed.
\end{proof}

This theorem does not have generalization to infinite products whenever rank function is unbounded in $\w$. In fact, a product of countably many peripherally Hausdorff spaces may not be peripherally Hausdorff. 

\begin{example} Indeed, if $\textrm{rank}_{T_2}(X_n)=n$ for $n\in\mathbb{N}$, then  the product of $X_n$s is not  peripherally Hausdorff. Striving for a contradiction assume that   
$$\textrm{rank}_{T_2}\left(\prod_{n}X_n\right)=\alpha$$
for some $\alpha\in\textrm{On}$. For $n\in\mathbb{N}$, let $x^{(1)}_n\in X_n$ and $\textrm{rank}_{T_2}([x^{(1)}_n]_{X_n})=n-1$. We have
$$[(x^{(1)}_n)_n]_{\prod_{n}X_n}=\prod_{n}[x^{(1)}_n]_{X_n}$$
and
$$\textrm{rank}_{T_2}([(x^{(1)}_n)_n]_{\prod_{n}X_n})=\alpha_1<\alpha.$$ 
Let now $x^{(2)}_n\in[x^{(1)}_n]$ and 
$$\textrm{rank}_{T_2}([x^{(2)}_1]_{[x^{(1)}_1]})=0,\;\textrm{and}\;\textrm{rank}_{T_2}([x^{(2)}_m]_{[x^{(1)}_m]})=m-2, \textrm{for}\;m\geq 2.$$
We have
$$\textrm{rank}_{T_2}([(x^{(2)}_n)_n]_{\prod_{n}[x^{(1)}_n]})=\alpha_2<\alpha_1.$$  
Continuing in that way we obtain a sequence of points $((x^{(k)}_n)_n)_k$, such that 
$$x^{(k)}_n\in [x^{(k-1)}_n],$$
$$\textrm{rank}_{T_2}([(x^{(k)}_n)_n]_{\prod_{n}[x^{(k-1)}_n]})=\alpha_k<\alpha_{k-1}$$
and
$$\textrm{rank}_{T_2}([x^{(k)}_1]_{[x^{(k-1)}_1]})=\ldots=\textrm{rank}_{T_2}([x^{(k)}_{k-1}]_{[x^{(k-1)}_{k-1}]})=0,$$
and
$$\textrm{rank}_{T_2}([x^{(2)}_m]_{[x^{(2)}_m]})=m-k, \textrm{for}\;m\geq k.$$
Hence none of the sets $[(x^{(k)}_n)_n]_{\prod_{n}[x^{(k-1)}_n]}$ has the Hausdorff rank equal to zero and we obtain the infinite decreasing sequence of ordinals:
$$\alpha>\alpha_1>\alpha_2>\ldots,$$
which is impossible.
\end{example}

The above example tells us that the infinite product of peripherally Hausdorff spaces can be not peripherally Hausdorff space. However, a finite product of any peripherally Hausdorff spaces is peripherally Hausdorff space.

\begin{theorem} Product of two peripherally Hausdorff spaces is a peripherally Hausdorff space. Moreover, if $rank(X) = \alpha$ and $rank(Y) =\beta$ then 
$$
rank(X\times Y) = \max\{\alpha,\beta\}.
$$
\end{theorem}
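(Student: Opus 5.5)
We argue by transfinite induction on $\gamma=\max\{\alpha,\beta\}$, proving simultaneously for all pairs of peripherally Hausdorff spaces $X,Y$ with $rank(X)=\alpha$, $rank(Y)=\beta$ that $X\times Y\in H_\gamma$. For the reverse inequality, $rank(X\times Y)\ge\max\{\alpha,\beta\}$, we use the hereditary Lemma: $X\times Y$ contains a homeomorphic copy $X\times\{y_0\}$ of $X$ and a copy $\{x_0\}\times Y$ of $Y$, so $X\times Y\in H_\delta$ forces $X,Y\in H_\delta$. (Strictly speaking, the Lemma gives membership in $H_\delta$ for subspaces, and $H_\delta$ is invariant under homeomorphism since the classes $[x]$ are defined topologically.) Note also that $X\times Y$ is $T_1$, since products of $T_1$ spaces are $T_1$.

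The key tool is the two-factor case of the Claim in the proof of Theorem \ref{product_of_spaces}:
$$
[(x,y)]_{X\times Y}=[x]_X\times[y]_Y .
$$
That argument used no rank hypothesis, so it applies verbatim. We also need that the relative topology on $[x]_X\times[y]_Y$ inherited from $X\times Y$ is the product of the relative topologies $\tau([x]_X)$ and $\tau([y]_Y)$. This is the standard fact that subspaces of products carry the product topology.

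For the induction, the base case $\gamma=0$ is trivial, since both spaces are singletons. Let $\gamma>0$ and fix $(x,y)\in X\times Y$. Since $X\in H_\alpha$, there is $\alpha'<\alpha$ with $[x]_X\subsetneq X$ and $[x]_X\in H_{\alpha'}$, unless $X$ is a singleton; the same holds for $Y$ with some $\beta'<\beta$. By the induction hypothesis, applied to the pair $([x]_X,[y]_Y)$ with $\max\{\alpha',\beta'\}<\gamma$, the product $[x]_X\times[y]_Y$ belongs to $H_{\max\{\alpha',\beta'\}}$. (Here we use monotonicity $H_\delta\se H_{\delta'}$, or simply rank $\le$.) Moreover, $[x]_X\times[y]_Y\subsetneq X\times Y$ as soon as one factor class is proper. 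Hence every class of $X\times Y$ has rank below $\gamma$, and $X\times Y\in H_\gamma$.

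The main obstacle is the degenerate situation where one factor, say $Y$, is a singleton, or has rank $0$ while the other has positive rank. If $Y$ is a singleton, then $\beta=0$ and there is no $\beta'<0$; in that case $[y]_Y=Y$, and we handle it separately by noting that $X\times Y\cong X$. In general, when $\beta<\gamma$, we may simply bound the rank of $[y]_Y$ by $\beta$ itself rather than by some $\beta'<\beta$, and the induction still applies because $\max\{\alpha',\beta\}<\gamma$ whenever $\alpha=\gamma>\beta$. So the cases to check are $\alpha=\beta=\gamma$ (where both classes drop in rank) and $\alpha\ne\beta$ (where only the larger factor's class needs to drop). Properness of the class follows from the factor whose class is proper. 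Keeping track of which factor supplies the strict decrease is the only delicate bookkeeping.
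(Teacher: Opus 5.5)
Your proof is correct and takes essentially the paper's route. Both arguments use induction on the rank, with the identity $[(x,y)]_{X\times Y}=[x]_X\times[y]_Y$ and the properness of one factor class giving the upper bound, and heredity giving the lower bound. The paper gets the lower bound by contradiction via $[x_0]_X\times\{y\}\se[(x_0,y)]_{X\times Y}$. You embed $X\times\{y_0\}$ directly, which is slightly cleaner, and you handle a singleton factor more carefully than the paper, which wrongly asserts that both $[x]_X$ and $[y]_Y$ are proper. One small caveat is that the hereditary Lemma's literal claim ``$X\in H_\delta$ implies $Y\in H_\delta$'' fails for a singleton $Y$ when $\delta>0$, since classes must then be proper. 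You should read it as $rank(Y)\le rank(X)$, which is all your lower bound needs and which trivially covers a singleton factor.
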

\begin{proof} Induction over rank of peripherally Hausdorff spaces. If $X$ and $Y$ are singletons then $X\times Y$ is also so theorem is true. Let $\alpha$ be any positive ordinal number and theorem is true for any two peripherally Hausdorff spaces with rank less than $\alpha.$ Let $rank(X) = \alpha$ and $rank(Y)\le \alpha.$ Choose any $(x,y) \in X\times Y.$ By similar argument as in proof of the Theorem \ref{product_of_spaces} we see that 
$$
[(x,y)]_{X\times Y} = [x]_X\times [y]_Y
$$
Because spaces $X, Y$ have rank $\alpha,\beta$ with $\beta\le \alpha.$ then spaces  $[x]_X, [y]_Y$ have rank less than $\alpha$ and are proper subspaces of $X$ and $Y$ respectively. By induction assumption we have
$$
rank([(x,y)]_{X\times Y}) = \max\{ rank [x]_X,rank [y]_Y\} < \alpha = rank (X)
$$
and $[x]_X\times [y]_Y$ is proper subspace of space $X\times Y.$ Then $rank (X\times Y) \le \alpha.$ Let observe that $rank (X\times Y) = \alpha$, indeed if no then there is $\gamma <\alpha$ s.t. for every $(x,y)\in X\times Y$ we have $rank [(x,y)]_{X\times Y} < \gamma.$ But by definition of $rank (X)$ there is $x_0\in X$ s.t. $\gamma \le rank [x_0]_X< \alpha.$ But for every $y\in Y$ we have
$$
\gamma \le rank [x_0]_X \le rank [x_0]_X\times \{y\}\le rank [x_0]_X\times [y]_Y = rank [(x,y)]_{X\times Y} < \gamma,
$$
contradiction. Finally, the space $X\times Y$ is peripherally Hausdorff space with rank equal to $\alpha = \max\{rank (X), rank (Y)\}.$

The proof is exactly the same if we assume that $rank (X) \le rank (Y) = \alpha.$
\end{proof}
\subsection{Tree structure of a peripherally Hausdorff spaces}
In this section we every peripherally Hausdorff space encode by well founded tree. By the tree $T=(V,E,r)$ we consider any directed connected tree without cycles which has distinguished  element $r\in V$ so called root of tree. Root of tree is unique vertex (node) with property that for every node $x\in V$ there is a chain $(a_i)_{i<n} \in V^n$ (path from $r$ to $x$) s.t. 
\begin{itemize}
    \item $(\forall i<n)\; (i+1<n\then (a_i,a_{+1}) \in E),$
    \item $a_0 = r$ and $a_{n-1} = x.$
\end{itemize}
For any $x,y\in V$ we say that $x\prec y$ if there is a path from $x$ to $y.$ It is easy to see that $P_T = (V,\prec)$ forms a partial order. We say that tree $T$ is well founded if there is no infinite chains in $P_T.$ It is well known that with every well-founded tree $T=(V,E,r)$ we can associate the so called rank function $rk:V_T\to On$ s.t for every $x,y\in V$ if $(x,y)\in E$ then $rk(y) < rk (x).$

Moreover, for every tree $T=(V_T,E_T,r_T)$ we can define $Rank(T)$ as follows:
$$
Rank(T) = \inf\{ \alpha\in On:\; (\exists rk)\; rk \text{ is a rank function on } T\land \alpha = rk(r_T) \}.
$$

Now for every peripherally Hausdorff space $X$ we find a tree $T_X$ and $rk:V_T\to On$ rank function that the following condition is holds
$$
rank(X) = Rank(T_X = rk (r_T).
$$
We can do this by the reccursion with respect of $rank(X).$ Let $rank(X) = 0$ then $X = \{x\}$ and then  set $V_X = \{X\},$ $E_X=\emptyset$ and $r_X = \{x\}.$ Then $T_X=(V_X,E_X,r_X)$ is well founded tree of course. Let us define $rk:V_X\to On$ as $rk(x) = 0.$ Of course, $rank(X) = rk(x) = Rank(T_X).$

Now let $\alpha>0$ be ordinal s.t. $rank(X) = \alpha$ and for every $\beta< \alpha$ and every $Y$ with $rank(Y) = \beta$ there are tree $T_Y$ and $rk:V_T\to On$ rank function on $T_Y$ s. that
$$
\beta = rank(Y) = rk(r_{T_Y}) = Rank(T_Y).
$$
Then set $T_X=(V_X,E_X,r_X)$ as an amalgamation of trees $\ct = \{T_{[x]}:\; x\in X\}$ with the root $r_T = X.$ where for each $T\in\ct$ there is $rk_T:V_T\to On$ s.t. 
\begin{enumerate}
    \item $T = T_{[x]}$ for some $x\in X,$
    \item $rk(r_T) = Rank(T) = rank([x]) < \alpha.$
\end{enumerate}
Then define function $rk_{T_X}:V_X\to On$ as follows:
$$
rk = \bigcup\{rk_T: T\in\ct \} \cup \{(r_X,\delta)\}
$$
where
$$
\delta = \sup \{ Rank(T)+1:\; T\in \ct\}.
$$
It is easy to see that $\delta = \alpha$ and $\alpha = rank(X) = rk(r_{T_X}) = Rank(T_X).$

We consider Niemytzki half-plane Example \ref{Niemyrzki_half_plane}.

\begin{example}\label{Niemytzki_tree} Let $X$ be  Niemytzki half-plane. Then using recurrsion explained above we can construct $T_{X} = (V_X,E_X,r_X)$ as follows: $V_X = V_2 \cup V_1 \cup V_0,$ where $V_2 = \{ X\} = \{ r_X\},$ $r_X$ is a root of $T_X,$
$V_1 = V_{10} \cup V_{11},$
$$
V_{10} = \{ \{(x,y)\}\cup \{(x,0)\}:x\in\bbr\land y>0\},
$$
$$
V_{11} = \{ \bbr\times\{0\}\cup \{x\}\times (0,\infty): x\in\bbr \land y = 0 \},
$$

$V_0 = W_{00}\cup W_{10},$
\begin{align*}
W_{00} = & \big\{ [(a,b)]_{ \{(x,y)\}\cup \{(x,0)\} }:\; x\in\bbr\land y>0 \land (a,b) \in \{(x,y)\}\cup \{(x,0)\} \big\}\\
= & \big\{ \{(a,b)\}:\; (\exists x\in\bbr)(\exists y>0)\; (a,b) \in \{(x,y)\}\cup \{(x,0)\} \big\},    
\end{align*}
$$
W_{01} = \{ \{(x,0)\}: x\in\bbr \} \cup \{ \{(x,y)\}:\; x\in\bbr \land y>0\}.
$$

Finally, for each $v\in V_X$ and each $i\in\{0,1,2\},$ we define $rk(v) = i \iff v \in V_i.$ Then $RankT_X) = rank(X) = 2.$

Here we can present out tree $T_X$ as follows.

\begin{tikzpicture}[grow=right, sloped]
\node[node] {Niemytzki half-plane}
    child {
        node[node] {$\{x,y\}\cup \{(x,0)\}$}        
            child {
                node[leaf, label=right:
                    {$\{(x,0)\}$}] {}
                edge from parent
                node[above] {$x\in\bbr$}
                node[below]  {$y = 0$}
            }
            child {
                node[leaf, label=right:
                    {$\{ (x,y) \}$}] {}
                edge from parent
                node[above] {$x\in\bbr$}
                node[below]  {$y>0$}
            }
            edge from parent 
            node[above] {$x\in \bbr$}
            node[below]  {$y>0$}
    }
    child {
        node[node] {$\bbr\times\{0\}\cup \{x\}\times (0,\infty)$}        
        child {
                node[leaf, label=right:
                    {$\{(x,y)\}$}] {}
                edge from parent
                node[above] {$x\in\bbr$}
                node[below]  {$y>0$}
            }
            child {
                node[leaf, label=right:
                    {$\{(x,0)\}$}] {}
                edge from parent
                node[above] {$x\in\bbr$}
                node[below]  {$y = 0$}
            }
        edge from parent         
            node[above] {$x\in\bbr$}
            node[below]  {$y = 0$}
    };
\end{tikzpicture}
\end{example}

Now we pass to example borrowed from proof of the Theorem \ref{H_alpha}.
\begin{example} Let $Y$ is a peripherally Hausdorff space with rkan equal to $\beta.$ We follow of construction of the space $X$ with the $rank(X) = \alpha = \beta+1.$ Set $X= Y\cup\{a_n:n\in\w\},$ where
$Y\cap \{a_n:n\in\w\} = \emptyset,$ Now we define a base at each $x\in X.$ Namely, if $x\in Y$ then every $U=U_X\cup A\in\cb_x$ if $x\in U_X,$ $U_X\se Y$ is open in $Y$ and $A\se \{y_n:n\in\w,\}$ is co-finite in $\{a_n:n\in\w\}.$ If $x = a_n$ for some $n\in\w$ then $V$ is open neighbourhood of $x$ if $x\in V\se \{a_n:n\in\w\}$ and $V$ is finite in $\{a_n:n\in\w\}.$ 

Let observe that for every $y,y'\in Y$ and any $U\in \cb_y$ $V\in \cb_{y'}$ $U\cap V$ contains of some co-finite subset of $\{a_n:n\in\w\}.$ But for every $n\in\w$ and $y\in Y$ there two disjoint open neighborhood of $y$ and $a_n.$ Thus for every $x\in X$ we have
$$
[x]_X = 
\begin{cases}
    Y       & \text{ for } x \in Y\\
    \{x\}   & \text{ for } x \in \{a_n:n\in\w\}.
\end{cases}
$$
Here $T_X$ is amalgamation of family of trees 
$$
\{ T_Y\}\cup \{T_{\{a_n\}}:\; n\in \w\}
$$
at root $r_T = X.$

Of course by definition $rk(T_Y) = \beta$ and $rk(T_{\{y_n\}}) = 0$ for $n\in \w.$ Then by the definition of $rk$ function on $T_X$

\begin{align*}
rk(r_X) =  rk (T_X) = & \max \{rk(T_Y)+1, \sup \{ rk(T_{\{a_n\}})+1:n\in\w \} \}\\ 
= & \max\{\beta+1, 0+1\} =  \beta+1 = \alpha.    
\end{align*}

Now we can draw  a tree $T_X$.

\begin{tikzpicture}[grow=right, sloped]
\node[node] {$Y\cup \{a_n:n\in\w\}$}
    child {
        node[node] {$[x]_X = Y = r_{T_Y}$}        
            child {
                node[node]
                    {$[x]_Y \; \cdots$}
                edge from parent
                node[above] {$x\in Y$}
                node[below]  {}
            }
            child {
                node[node]
                    {$[y]_Y \; \cdots$}
                edge from parent
                node[above] {$y\in Y$}
                node[below]  {}
            }
            edge from parent 
            node[above] {$x\in Y$}
            node[below]  {}
    }
    child {
        node[leaf, label=right: {$\{a_0\} = [a_0]_X$}] {}        
        edge from parent         
            node[above] {$a_0$}
            node[below]  {}
    }
    child {
        node[leaf, label=right: {$\{a_1\} = [a_1]_X $}] {}        
        edge from parent         
            node[above] {$a_1$}
            node[below]  {}
    }
        child {
        node[leaf, label=right: {$\{a_n\} = [a_n]_X$}] {}        
        edge from parent         
            node[above] {$a_n$}
            node[below]  {$n\in\w $}
    }
    ;
\end{tikzpicture}
\end{example}

\section{Locally Hausdorff spaces}
In this section, we will investigate the so called locally Hausdorff spaces. In this class of space the fixed point theorem holds but for continuous feebly contraction. 

By Kupka Theorem, every continuous weak contraction on Hausdorff space has a unique fixed point. Moreover, in every first countable Hausdorff space, the fixed point theorem is true for any feebly topological contraction which is closed map\footnote{By closed map we mean that every image of closed set is closed.} also (we do not require continuity). We show that in first countably locally Hausdorff spaces the mentioned theorem can be not true.

\begin{definition}[Locally Hausdorff space] We say that any $T_1$ topological space $X$ is locally Hausdorff if for every point there is an open neighborhood $U$ such that $U$ is Hausdorff space in relative topology.
\end{definition}

Every Hausdorff is, of course, a locally Hausdorff space. The space in Example \ref{no_closed_graph} is a Hausdorff space locally, but not a Hausdorff space.

Now we are ready to formulate the main theorem of this section.
\begin{theorem}\cite[Theorem 6.1]{MR1} Every continuous feeble topological contraction has a unique fixed-point in locally Hausdorff space.
\end{theorem}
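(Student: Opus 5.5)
The plan has two parts. Uniqueness comes first and is the same argument as at the end of the proof of Kupka's theorem in Section \ref{Kupka-section}. If $a=f(a)$ and $b=f(b)$ with $a\neq b$, then $\cu=\{X\setminus\{a\},X\setminus\{b\}\}$ is an open cover, because $X$ is $T_1$. Since $f^n[\{a,b\}]=\{a,b\}$ for every $n$, no iterate of the pair fits into a single member of $\cu$, which contradicts feeble contractivity. So the real work is existence. I would mimic Kupka's proof, using local Hausdorffness as a substitute for the closed-graph assumption.

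For existence, assume towards a contradiction that $f(x)\neq x$ for all $x$. For each $x$ fix an open Hausdorff neighbourhood $H_x$. The aim is to build an open cover $\{U_x:x\in X\}$ with $x\in U_x$ such that no $v$ has both $v$ and $f(v)$ in the same $U_x$. Feeble contractivity applied to a pair $\{u,f(u)\}$ produces $n$ and some $U_x$ containing both $f^n(u)$ and $f(f^n(u))$, which then gives the contradiction. The construction depends on the position of $f(x)$.

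\emph{Case $f(x)\in H_x$.} Since $H_x$ is Hausdorff and open in $X$, there are disjoint sets $A\ni x$ and $B\ni f(x)$, open in $X$ and contained in $H_x$. Put $U_x=A\cap f^{-1}[B]$, which is open by continuity. Then $f[U_x]\se B$ is disjoint from $U_x\se A$, as required.

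\emph{Case $f(x)\notin H_x$.} Here $x$ and $f(x)$ need not be separable; the line with two origins shows that in a locally Hausdorff $T_1$ space two points can be non-separable with neither lying in the other's Hausdorff neighbourhood. This is the step I expect to be the main obstacle. In this case I would not ask $U_x$ to be disjoint from its image. Instead I would take $U_x=H_x$ and shift the burden to the iteration.

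Concretely, apply feeble contractivity to the pair $\{x,f(x)\}$ and the Hausdorff cover $\{H_z:z\in X\}$. This gives $n$ and $z$ with $f^n(x),f^{n+1}(x)\in H_z$. If $f^{n+1}(x)=f^n(x)$ we already have a fixed point. Otherwise $v=f^n(x)$ is a point with $f(v)\in H_z$, a Hausdorff neighbourhood of $v$, so after re-choosing $H_v=H_z$ the point $v$ falls under the first case. Thus the orbit of every point eventually reaches a point $w$ that can be separated from $f(w)$.

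I would then try to finish as follows. Let $Y$ be the set of points that are separable from their image, and replace the starting point $u$ by such a $w$. Apply feeble contractivity to $\{w,f(w)\}$ with the mixed cover consisting of the sets $U_y$ for $y\in Y$ and $H_x$ for $x\notin Y$. If the witnessing set is some $U_y$, we get a contradiction. If it is some $H_x$ with $x\notin Y$, the two points $f^n(w),f^{n+1}(w)$ lie in one Hausdorff open set. I would then use continuity of $f^n$ and the Hausdorffness of $H_x$ to pull the separation back and shrink the neighbourhoods around $f^n(w)$. If this shrinking cannot be made to work uniformly, the fallback is to show that the relation ``$v$ and $f(v)$ lie in a common $H_x$'' makes the graph of $f$ closed on $\bigcup_x H_x\times H_x$. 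Kupka's argument only ever evaluates $G_F$ inside sets $U\times U$ with $U$ a member of the cover, so closedness there should suffice. This fallback is the version of the argument I would actually write out.
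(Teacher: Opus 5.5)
There is a genuine gap: nothing in your argument controls the points outside $Y$, and neither of your proposed finishes does so.

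In the mixed-cover step, suppose the witnessing member is $H_x$ with $x\notin Y$. Then you only learn that $f^n(w)\neq f^{n+1}(w)$ lie in one Hausdorff open set, i.e.\ that $f^n(w)\in Y$, and this contradicts nothing.

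The fallback does not follow from continuity and local Hausdorffness. Kupka's argument needs, for \emph{every} $x$, a neighbourhood $U_x$ with $U_x\cap f[U_x]=\emptyset$, equivalently $(x,x)\notin\overline{G_f}$. You can only produce this for $x\in Y$. Here is a counterexample on the line with two origins $0_1,0_2$:
\begin{itemize}
\item Let $f(t)=t/2$ for $t\neq0$, $f(0_1)=0_2$ and $f(0_2)=0_1$; this map is continuous.
\item With $H_1=(\mathbb{R}\setminus\{0\})\cup\{0_1\}$, we have $(t,t/2)\to(0_1,0_1)$ inside $H_1\times H_1$ as $t\to0$, while $(0_1,0_1)\notin G_f$.
\item So the graph is not closed in $\bigcup_x H_x\times H_x$.
\end{itemize}
This $f$ is not a feeble contraction. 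However, your cover construction never uses contractivity, so it cannot work in general.

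The missing idea, which is the core of the paper's proof, is to find a point whose whole orbit stays outside $Y$.
\begin{itemize}
\item Fix $u$ and choose $y$ such that every neighbourhood of $y$ contains some pair $\{f^k(u),f^{k+1}(u)\}$. Such a $y$ exists: otherwise the neighbourhoods witnessing the failure form an open cover that violates feeble contractivity for $\{u,f(u)\}$.
\item Now take any $m$ and open sets $U\ni f^m(y)$, $V\ni f^{m+1}(y)$. By continuity, $(f^m)^{-1}[U]\cap(f^{m+1})^{-1}[V]$ is a neighbourhood of $y$. Hence it contains some $f^k(u),f^{k+1}(u)$, and then $f^{m+k+1}(u)\in U\cap V$.
\item So $f^m(y)\notin Y$ for all $m$.
\end{itemize}
This contradicts your own correct observation that every orbit enters $Y$, obtained by applying feeble contractivity to $\{y,f(y)\}$ with the Hausdorff cover. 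With this step, the sets $U_y$ and the mixed cover become unnecessary. Your uniqueness argument is fine.
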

\begin{proof} Let us assume that the conclusion does not hold. Let $x\in X.$

First of all, we find a point y such that for every open neighbourhood $U$ of $y$ there exists $n\in\bbn$ such that $\{fn(x), f n+1(x)\} \se U.$ If not then there is an open cover $\cu$ of $X$ s.t. there is no $n\in\bbn$ that $\{f^n(x),f^{n+1}\}\nsubseteq U$ for every $U\in\cu.$ But this contradicts with $f$ is feeble contraction.

Now we show that for any $m\in\bbn$ and any open sets $U,V$ containing $f^m(y), f^{m+1}(y)$ respectively we have $U\cap V\ne \emptyset.$ Indeed, let observe that
$$
W = \underbrace{f^{-1}[\ldots f^{-1}[}_{m} U]\ldots ]\cap 
\underbrace{f^{-1}[\ldots f^{-1}[}_{m+1}V]\ldots ]
$$
is neighbourhood of $y$ and then $\{f^k(x),f^{k+1}(x)\}\se W$ for some $k\in\bbn.$ Thus we have $f^{m+k+1}(x)\in U\cap V.$

Because $X$ is locally Hausdorff then there is open cover $\cu$ of $X$ s.t. each member of $\cu$ is $T_2$ open set. But $f$ is feeble contraction then there are $m\in\bbn$ and $U\in\cu$ s.t. $\{f^m(y),f^{m+1}(y)\}\se U.$ Then $f^m(y),f^{m+1}(y)$ have disjoint neighbourhoods, what is impossible by the previous remark.

Uniques of fixed point follows in the same way like in he proof of the Theorem \ref{funy fixed point}.
\end{proof}

Of course we have.
\begin{cor} Every continuous weak topological contraction on Hausdorff space has a unique fixed-point.
\end{cor}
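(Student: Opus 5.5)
The corollary should follow by observing that every Hausdorff space is locally Hausdorff, and then applying the preceding theorem (\cite[Theorem 6.1]{MR1}). The term ``weak topological contraction'' will be read as a synonym for ``feeble topological contraction'', as in the remark before the definition of locally Hausdorff spaces. The plan has three steps, carried out in order.

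First, check that a Hausdorff space $X$ satisfies the definition of a locally Hausdorff space. It is $T_1$, since $T_2$ implies $T_1$. For every $x\in X$ the open neighbourhood $U=X$ works, because $X$ with its own topology is Hausdorff. Second, apply the previous theorem to the given continuous feeble contraction $f:X\to X$. This yields a fixed point, and uniqueness follows by the same argument as before. If $x=f(x)$ and $y=f(y)$ with $x\ne y$, consider the open cover $\{\{x\}^c,\{y\}^c\}$. Contractivity gives some $n$ with $\{x,y\}=f^n[\{x,y\}]$ contained in one of these two sets, which is impossible.

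Alternatively, the closed-graph route through Kupka's Theorem \ref{Kupka-function-thm} works equally well. A continuous map into a Hausdorff space has a closed graph, as observed in the Introduction, and Hausdorff spaces are $T_1$. Kupka's theorem then gives existence and uniqueness at once.

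There is no real obstacle here. The only point needing care is terminological: ``weak'' must be identified with ``feeble'', so that the hypotheses match those of the preceding theorem exactly.
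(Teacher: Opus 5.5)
Your proposal is correct and matches the paper's route: the paper introduces the corollary with ``Of course we have'' right after the fixed-point theorem for locally Hausdorff spaces, relying on its earlier remark that every Hausdorff space is locally Hausdorff (with ``weak'' meaning ``feeble''), exactly as you do. Your alternative argument, via the closed graph of a continuous map on a Hausdorff space and Kupka's Theorem~\ref{Kupka-function-thm}, is the same corollary the paper already draws in its Introduction.
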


Here we present proof of the Theorem about existence of fixed point for closed feeble contraction defined on Hausdorff space.

\begin{theorem}\cite[Theorem 4.5]{MR1} Every closed feeble topological contraction has a unique fixed-point in first countable Hausdorff space.
\end{theorem}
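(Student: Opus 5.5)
I will argue by contradiction: suppose $f:X\to X$ is a closed feeble topological contraction on a first countable Hausdorff space $X$ (closed meaning images of closed sets are closed) and $f(z)\ne z$ for every $z\in X$. Uniqueness follows exactly as in the proof of Theorem \ref{funy fixed point}. If $f(a)=a\ne b=f(b)$, the cover $\{\{a\}^c,\{b\}^c\}$ together with the contraction property gives some $n$ with $f^n[\{a,b\}]=\{a,b\}$ contained in one of the two sets, which is absurd. So the work is existence.

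\textbf{Step 1 (a cluster point of the orbit).} Fix $x\in X$ and look at the pairs $(f^n(x),f^{n+1}(x))$. As in the proof of the locally Hausdorff theorem, there is a point $y$ such that every neighbourhood $U$ of $y$ contains $\{f^n(x),f^{n+1}(x)\}$ for some $n$. Otherwise each $z$ has a neighbourhood $U_z$ containing no such pair, and feebleness applied to the points $x,f(x)$ and the cover $\{U_z\}$ gives a contradiction. Using a countable decreasing local base $(B_k)_k$ at $y$, I pick $n_1<n_2<\dots$ with $f^{n_k}(x),f^{n_k+1}(x)\in B_k$. Then both sequences $f^{n_k}(x)\to y$ and $f^{n_k+1}(x)\to y$. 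Strict increase of the $n_k$ needs care: if only finitely many $n$ occur, then some pair has $f^n(x)=y=f^{n+1}(x)$, and $y$ is already a fixed point.

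\textbf{Step 2 (use closedness to get $f(y)=y$).} Assume $f(y)\neq y$. By Hausdorffness choose disjoint open $V\ni y$ and $W\ni f(y)$. The set $A=\{f^{n_k}(x):k\in\bbn\}\cap V$ has all but finitely many terms of the sequence. The key is that $f$ maps the convergent sequence $(f^{n_k}(x))$ onto $(f^{n_k+1}(x))$, which also converges to $y$. Take $C=\{f^{n_k}(x):k\ge k_0\}\cup\{y\}$. In a Hausdorff space a convergent sequence together with its limit is closed, so $C$ is closed. Then $f[C]=\{f^{n_k+1}(x):k\ge k_0\}\cup\{f(y)\}$ is closed. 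Now split into cases. If $y\in f[C]$, then $y=f(y)$ (a contradiction) or $y=f^{n_k+1}(x)$ for some $k$. If $y\notin f[C]$, the points $f^{n_k+1}(x)\to y$ lie in the closed set $f[C]$ while $y$ does not, a contradiction.

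\textbf{Main obstacle.} The delicate case is $y=f^{m}(x)$ for some $m$, i.e.\ $y$ lies on the orbit. Then closedness of the image does not immediately help. Here I would first remove from $C$ the finitely many indices with $f^{n_k+1}(x)=y$. If infinitely many indices hit $y$, then $f^{n_k}(x)=f^{n_{k'}}(x)$ for some $k<k'$, so the orbit is eventually periodic. In that case I would apply the contraction to the cover $\{\{p\}^c,\{q\}^c\}$ for two distinct points $p,q$ of the cycle: the images $f^n(p),f^n(q)$ eventually cover the whole cycle, so no single member contains both, forcing the cycle to be a single fixed point. After that reduction, the infinitely many $f^{n_k+1}(x)\neq y$ accumulate at $y\notin$ (the reduced) $f[C]$, unless $f(y)=y$, which again contradicts closedness. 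First countability is used only to extract the sequences, so that $C$ is closed. I expect handling these degenerate repetitions to be where the argument needs the most care.
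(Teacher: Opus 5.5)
The survey states this theorem without proof (it only cites the original source), so your argument is judged on its own. Most of it is sound:
\begin{itemize}
\item Step~1 is the same cluster-point argument the paper uses for locally Hausdorff spaces.
\item The extraction of strictly increasing $n_k$ (or else $f^n(x)=y=f^{n+1}(x)$) is correct.
\item $C$ is closed precisely because $X$ is Hausdorff. This is exactly what breaks down in the paper's locally Hausdorff example on $\bbz$, where $0,1,2,\dots$ converges to every negative integer.
\item If only finitely many $k$ satisfy $f^{n_k+1}(x)=y$, enlarging $k_0$ and using that the closed set $f[C]$ must contain the limit $y$ gives $f(y)=y$.
\item Uniqueness is fine, and the sets $V,W,A$ are never needed.
\end{itemize}

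The gap is the degenerate case where $f^{n_k+1}(x)=y$ for infinitely many $k$. Then $y$ lies on a cycle $Z$ of $f$ with $|Z|\ge 2$. You claim that the cover $\{\{p\}^c,\{q\}^c\}$ with distinct $p,q\in Z$ contradicts feebleness, but this is false once $|Z|\ge 3$. Feebleness only asks for a single $n$, so to refute it you would need $\{f^n(p),f^n(q)\}=\{p,q\}$ for every $n$. That happens only for $2$-cycles. For a cycle $c_0\mapsto c_1\mapsto c_2\mapsto c_0$ with $p=c_0$, $q=c_1$, already $n=1$ gives $\{c_1,c_2\}\se\{c_0\}^c$. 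Your remark that the iterates ``eventually cover the whole cycle'' is irrelevant.

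The repair is easy, and either of two routes works:
\begin{itemize}
\item Use the finite open cover $\{X\setminus(Z\setminus\{z\}):z\in Z\}$. Each member meets $Z$ in exactly one point, while $f^n(p)\ne f^n(q)$ for all $n$ because $f$ permutes $Z$. This contradicts feebleness.
\item More simply, observe that in this case the orbit $O=\{f^n(x):n\in\w\}$ is finite. So $X\setminus(O\setminus\{y\})$ is an open neighbourhood of $y$, and both $f^{n_k}(x)$ and $f^{n_k+1}(x)$ are eventually equal to $y$. Hence $f(y)=f(f^{n_k}(x))=f^{n_k+1}(x)=y$ directly, with no further appeal to the contraction property.
\end{itemize}
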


Here we present example where this theorem can not be formulated on every first countable locally Hausdorff space.
\begin{example} Let $X=\bbz$ with subbase:
$$
\{ n\cup A:\ (n\in\bbz\setminus\w\then  (A\se \w\land \w\setminus A\in [w]^{<\w}))\ \land (n\in\w \then A=\emptyset)\}.
$$
Let observe that $X$ is first countable and $\{n\}\cup \w$ is Hausdorff subspace for any $n<0.$ Now define $f(n) = n+1$ for every $n\in X.$ It is easy to check that $f$ is feeble contraction. Moreover, $f$ is closed map because is bijection on $X$ and image of any open subset is  open. Let observe that $f$ has not a fixed point and $f^{-1}[\{0\}] = \{-1\}$ which is not open and then $f$ is not continuous map.
\end{example}

\section{Tree spaces}

In this section, we consider a wider class of topological spaces than peripherally Hausdorff spaces. The price is that the fixed point requires that the ultra-fine space should be compact. We do not know whether compactness can be dropped.

The inspiration for this  kind of space was the tree structure of a peripherally Hausdorff space. Trees for these spaces were well-founded. Here we use a tree with countable paths (chains).

Fix cardinal $\kappa,$ then $T\se \kappa^{<\w}$ ids tree iff
$$
(\forall \sigma\in T)(\forall n\in\w)\ (\sigma\restriction n \in T).
$$
For the tree $T\se \kappa^{<\w}$, the set $[T]=\{ t\in \kappa^\w:(\forall n\in\w)\ (t\restriction n\in T)\}$ is called the body of $T.$ Next, if $\sigma \in T$ then
$succ_T(\sigma)=\{\alpha<\kappa:\ \sigma\with\alpha\in T\}$ denotes all successors of $\sigma$ in $T.$

\begin{definition}[Tree space] Let $X$ be a topological space and $X=\{x_\alpha:\alpha<\kappa\}$ be its enumeration. We say that the $(X,T,\rho)$ is an tree space if
\begin{enumerate}
    \item $T\se\kappa^{<\w}$ is a tree and $\emptyset\in T$,
    \item $\rho:T\to P(X)$ s.t. $\rho(\emptyset) = X,$\\
    for every $\sigma\in T$ $succ_T(\sigma) = \{ \alpha<\kappa:\ x_\alpha\in \rho(\sigma)\}$ and 
    $$
    (\forall \alpha\in succ_T(\sigma))\  (\rho(\sigma\with \alpha) = [x_\alpha]_{\rho(\sigma)}),
    $$
    \item $\forall t\in[T]$ $\bigcap\{ \rho(t\restriction n): n\in\w\}$ is a singleton,
\end{enumerate}
\end{definition}

Before the fixed-point theorem, we show the following lemma.
\begin{lemma}\label{lemma-ultra} If $(X, \tau)$ is a topological space and $f:X\to X$ is a feeble$+$ topological contraction, then
$$
(\forall x\in X)(\exists y\in X) (\forall U_y\in \tau) (\forall n^\infty)\ f^n(x) \in U_y,
$$
where $f^n$ is $n$-th iteration of $f,$ $\tau$ topology and $y\in U_y.$ 
\end{lemma}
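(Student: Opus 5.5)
We argue by contradiction, in the spirit of Claim \ref{claim-1-alfa}. Fix $x\in X$ and suppose that no $y\in X$ has the stated property. Then for every $y\in X$ we can choose an open set $U_y\in\tau$ with $y\in U_y$ such that the orbit of $x$ leaves $U_y$ infinitely often:
$$
(\exists^\infty n\in\w)\; f^n(x)\notin U_y .
$$
The family $\cu=\{U_y:\ y\in X\}$ is an open cover of $X$, because each $y$ lies in its own $U_y$.

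Next we apply the definition of a feeble$+$ topological contraction to the cover $\cu$ and the pair $x,x$, or $x,f(x)$ if one prefers two points, as in the proof of Claim \ref{claim-1-alfa}. This gives some $U\in\cu$ and some $n_0\in\w$ such that $f^n[\{x,x\}]=\{f^n(x)\}\se U$ for all $n>n_0$. We have $U=U_y$ for some $y\in X$. So $f^n(x)\in U_y$ for almost all $n$, which contradicts the choice of $U_y$. Hence some $y$ has the required property: every open $U_y$ containing $y$ contains $f^n(x)$ for all but finitely many $n$.

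The main point to check is that the definition of feeble$+$ contraction may be applied with $x=y$. The definition quantifies over all $x,y\in X$, so the diagonal case is permitted. If one insists on distinct points, we use the pair $\{x,f(x)\}$ instead: $f^n(x)\in U$ for all $n>n_0$ still follows. We should also read the paper's ``$f^n[\{x,y\}]\in U$'' as $\se U$. The only real content is that the negation of the conclusion gives, pointwise, a neighbourhood that is escaped infinitely often, and these neighbourhoods form an open cover. Choosing the neighbourhoods is a simple application of the axiom of choice. No further obstacle is expected.
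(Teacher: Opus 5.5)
Your proof is correct and is essentially the paper's argument. The paper also negates the conclusion to obtain, for each $y$, a neighbourhood $U_y$ that the orbit of $x$ leaves infinitely often, observes that these sets form an open cover, and applies feeble$+$ contractivity to the pair $\{x,f(x)\}$ to get a contradiction. Your observation that the diagonal pair $x,x$ is already permitted by the definition is also valid.
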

\begin{proof} If not, then for some $x\in Y$ and every $y\in Y$, we can find $U_y$ such that for infinitely many $n\in\w$ $f^n(x)\notin U_y.$ So $\{U_y:y\in X\}$ is an open cover of $X.$ Then, by the weak$^{+}$ contractivity of $f$, there exists $y_0\in X$ such that $\forall n^\infty f^n[\{x,f(x)\}]\se U_{y_0}.$ Then, $\forall n^\infty f^n(x)\in U_{y_0},$ is a contradiction.
\end{proof}

\begin{theorem} If $(X,T,\rho)$ is a compact tree space and $f:X\to X$ is a continuous feeble$+$-contraction the there exists unique $x\in X$ s.t. $x = f(x).$
\end{theorem}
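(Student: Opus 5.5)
Uniqueness is the easy part. If $a=f(a)$ and $b=f(b)$ are distinct, use the $T_1$ cover $\{\{a\}^c,\{b\}^c\}$ together with the feeble$+$ property, exactly as in the proof of Theorem \ref{funy fixed point}. (Here I use that a tree space is $T_1$, which is implicit, as in the earlier sections.) The real work is existence, which I plan to obtain by building a branch $t\in[T]$ along which a fixed point is trapped, and then invoking condition (3) of a tree space.

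Fix $u\in X$. By Lemma \ref{lemma-ultra}, there is a point $y$ such that every neighbourhood of $y$ contains $f^n(u)$ for all but finitely many $n$. So $f^n(u)\to y$, and in particular $y$ is a cluster point of the orbit.

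First I show the whole orbit of $y$ lies in $[y]_X$. Suppose $f^m(y)\notin[y]$. Then there is an open $U\ni y$ and an open $V\ni f^m(y)$ with $U\cap V=\emptyset$. By continuity, $W=U\cap f^{-m}[V]$ is a neighbourhood of $y$. Eventually $f^n(u)\in W$, hence $f^{n+m}(u)\in V$. But also eventually $f^{n+m}(u)\in U$, a contradiction. This is the argument of Claim \ref{claim-2-alfa}, now with genuine convergence.

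Next, set $W_1=\overline{\{f^n(y):n\in\w\}}$. By continuity $f[W_1]\subseteq W_1$, and $W_1\subseteq[y]_X$ because classes are closed, being intersections of closures. Moreover $W_1$ is compact, as a closed subset of a compact space, and $f\restriction W_1$ is again a continuous feeble$+$ contraction, by the cover-extension argument from Claim \ref{claim-3-alfa}.

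Now I iterate. Put $\sigma_1=\langle\alpha_1\rangle$ with $x_{\alpha_1}=y$, so $\rho(\sigma_1)=[y]_X$. Apply Lemma \ref{lemma-ultra} inside the subspace $\rho(\sigma_1)$ to $f\restriction W_1$, starting from $y$. This gives a limit $y_2\in W_1$ whose orbit lies in $[y_2]_{\rho(\sigma_1)}=\rho(\sigma_1\with\alpha_2)$. Set $W_2=\overline{\{f^n(y_2)\}}$, and continue. The result is a branch $t=\langle\alpha_1,\alpha_2,\dots\rangle\in[T]$ and a decreasing sequence of nonempty compact $f$-invariant sets $W_k\subseteq\rho(t\restriction k)$. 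Each $W_k$ is compact because it is closed in $W_{k-1}$. One must check that closures computed in the subspace $\rho(t\restriction k)$ agree with relative closures, so that the $W_k$ really are nested and closed in $X$; that is routine bookkeeping.

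By compactness, $K=\bigcap_k W_k\neq\emptyset$. Also $K\subseteq\bigcap_k\rho(t\restriction k)$, which is a singleton $\{z\}$ by condition (3). Hence $K=\{z\}$. Since each $W_k$ is $f$-invariant, $f(z)\in\bigcap_k W_k=\{z\}$, so $f(z)=z$.

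The main obstacle is the compactness step. Compactness of $X$ passes to the $W_k$ only if each $W_k$ is closed in $X$, that is, only if each class $\rho(\sigma)$ is closed in $X$ and not merely in $\rho(\sigma\restriction(n-1))$. I would first try to prove by induction on the length of $\sigma$ that every $\rho(\sigma)$ is closed in $X$. A closed subset of a closed subspace is closed, so this induction should go through, since each class is closed in its ambient subspace. Failing that, I would pick the $W_k$ as closures in $X$ intersected down, and rely on the nonemptiness of intersections of decreasing closed sets in the compact space $X$.
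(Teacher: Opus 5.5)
Your proposal is correct and takes essentially the same route as the paper. Both arguments recursively pick limit points via Lemma \ref{lemma-ultra}, show each new point's orbit stays in its class inside the current $\rho(\sigma_n)$, form nested $f$-invariant orbit closures, and then use compactness with condition (3) to trap a fixed point, with uniqueness coming from the two-point cover. Your explicit induction that every $\rho(\sigma)$ is closed in $X$, and your cleaner base step, fill in details the paper leaves implicit. Also, $T_1$ need not be assumed separately: if $y\in\overline{\{x\}}$ with $y\neq x$, both points would stay in every $\rho$ along the constant branch through $x$, contradicting condition (3).
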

\begin{proof} By the recursion, we define the sequence $\{\sigma_n\in T: n\in\w\}$, $\{y_n:n\in\w\}$, and $\{ F_n:\ n\in\w\}$ such that $\sigma_0=\emptyset,$ $y_0\in X,$ $F_0=X$ and for every $n\in\w$  the following is holds
\begin{enumerate}
    \item $\sigma_n\subsetneq \sigma_{n+1}$,
    \item $F_{n+1}\se F_n$ and $f[F_n]\se F_n,$
    \item $F_n = \clo{ \{ f^k(y_n):k\in\w\} } \se [y_n]_{\rho(\sigma_n)}\se \rho(\sigma_n).$
\end{enumerate}
Let us assume that $\sigma_n\in T$ and $y_n\in F_n$ are defined with 
$$
F_n = \clo{ \{ f^k(y_n):k\in\w\} } \se [y_n]_{\rho(\sigma_n)}\se \rho(\sigma_n).
$$

Let us observe that by the continuity of $f$, we have $f[F_n]\se F_n$, and $f\restriction F_n$ is a continuous feeble$^{+}$-contraction on $F_n.$
 Then, using the Lemma \ref{lemma-ultra} for $X = F_n,$ and $x = y_n$, we can find $y = y_{n+1}\in F_n\se [y_n]_{F_n}\se  [y_n]_{\rho(\sigma_n)}$ such that.
$$
\forall U\in \tau_{y_{n+1}}\forall k^\infty f^k(y_n)\in U.
$$
As in the proof of the fixed point theorem for peripheral spaces, using the continuity of $f$, we have the following Claim.
\begin{claim}
$$
\forall k\in\w f^k(y_{n+1}) \in [y_{n+1}]_{F_n}.
$$    
\end{claim}
\begin{proof}[Proof of Claim] Let us observe that $f^0(y_{n+1}) = y_{n+1} \in [y_{n+1}].$ Let us assume that for some $k\in\w$, we have $f^k(y_{n+1})\in [y_{n+1}]_{F_n}$, but $f^{k+1}(y_{n+1})\notin [y_{n+1}]_{F_n}.$ By the definition of $[y_{n+1}]_{F_n}$, we have that for every $U\in\tau_{y_{n+1}}(F_n)$ $y_{n+1},f^k(y_{n+1})\in \clo{U}.$ But $f^{k+1}(y_{n+1})\notin[y_{n+1}]_{F_n}$ then there is $U\in\tau_{y_{n+1}}(F_n)$ s.t. $f^{k+1}(y_{n+1})\notin \clo{U}.$ So, the complement $V=\clo{U}^c$ in space $F_n\se \rho(\sigma)$ is an open neighborhood of $f^{k+1}(y_{n+1})$ in $F_n.$ But it is not possible because 
$$
\forall j^\infty f^i(y_n) \in U \land \forall j^\infty f^i(y_n) \in V,
$$
what is not true because $U\cap V = \emptyset.$
\end{proof}

Then there exists $\alpha<\kappa$ such that $y_{n+1} = x_\alpha\in F_n\se \rho(\sigma_n).$ Set $\sigma_{n+1} = \sigma\with \alpha$ and then $\rho(\sigma_{n+1}) =  [y_{n+1}]_{\rho(\sigma_n)}.$ 

Set $F_{n+1} = \clo{\{f^k(y_{n+1}): k\in\w\}}.$ Then $F_{n+1} \se \rho(\sigma_{n+1})$ and $F_{n+1}\se F_n.$ Moreover, we have $f[F_{n+1}] \se F_{n+1}.$ The $n+1$ step is finished.

Then, by the compactness of $X$, we have 
$$
\emptyset\ne \bigcap\{F_n:n\in\w\}\se \bigcap\{ \rho(\sigma_n): n\in\w\} = \{x\},
$$
for some $x\in X.$ Then $\{x\} = \bigcap\{F_n:n\in\w\}.$ But for every $n\in\w$ $f[F_n]\se F_n$ then $f(x)\in F_n$ for each $n\in\w.$ Finally, $f(x) = x.$ Using feeble$^{+}$ contractivity of $f$ we can conclude that there exists a unique fixed point of $f.$ 
\end{proof}

\section{\v{C}ech complete spaces, atractors on $T_1$ compact spaces}
In the previous sections, all functions should be continuous weak or feeble$+$-contractions in fixed-point theorems.  Now we prove the fixed-point theorem in the so called weak \v{C}ech complete spaces. For the original \v{C}ech complete spaces topology, see \cite{E}. But the price of it is that our functions should be closed maps instead of continuous functions.

The notion of weak \v{C}ech completeness is a generalization of a metric space that is complete. The notion of \v{C}ech complete space is well known, but we drop the assumption that our space is Tychonoff.

\begin{definition} 	Topological $T_1$ space $X$ is weak \v{C}ech complete if
	\begin{itemize}
		\item there exists $ \{\cu_i: i\in\w \}$, $\cu_i$ - an open cover of $X$ for  $i\in\w$,
		\item for every centered $\{F_m\in Clo(X):m\in\w\}$ such that $\forall i\in\w\, \exists m\in\w\, \exists U\in\cu_i\; F_m\subseteq U$ 
	\end{itemize}
	then $\bigcap \{F_m\:\; m\in\w\} \ne \emptyset.$
\end{definition}
Let us observe that every complete metrizable space is a weak \v{C}ech complete space. Every compact space is also \v{C}ech complete space.

Here we introduce the notion of the topological contraction, which we use in the fixed-point theorem in weak \v{C}ech complete spaces.
\begin{definition}[Topological contraction] Let $X$ be a topological space. Then $f:X\to X$ is a topological contraction if for every open cover $\cu$ of the space $X$, there exists $n\in\w$ such that $f^n [X]\se U$ for some $U\in\cu.$
\end{definition}

\begin{theorem}\cite[Theorem 3.2]{MR1}\label{Banach-T1-complete}
	If $X$ is a $T_1$ weak \v{C}ech complete space and $f:X\to X$ is a closed topological contraction, then $f$ has a unique fixed point.
\end{theorem}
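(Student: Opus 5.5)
The plan is to work with the decreasing sequence of closed sets $F_m = \clo{f^m[X]}$, $m\in\w$, and to show that their intersection is exactly one point, and that this point is fixed.

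First, since $f$ is a closed map and $X$ is closed, every iterate $f^m$ is closed. Hence $F_m = f^m[X]$ is already closed, and $F_{m+1} = f[F_m]\se F_m$ because $f[X]\se X$. The family $\{F_m:m\in\w\}$ is therefore a decreasing chain of nonempty closed sets, so it is centered.

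Second, let $\{\cu_i:i\in\w\}$ be the sequence of open covers witnessing weak \v{C}ech completeness. Since $f$ is a topological contraction, for each $i$ there are $n_i\in\w$ and $U\in\cu_i$ with $f^{n_i}[X]\se U$, that is, $F_{n_i}\se U$. By the definition of weak \v{C}ech completeness, $K=\bigcap\{F_m:m\in\w\}\ne\emptyset$.

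Third, I show that $K$ is a singleton. Suppose $a\ne b$ both lie in $K$. By $T_1$, $\cu=\{X\setminus\{a\},X\setminus\{b\}\}$ is an open cover. The contraction property gives $n$ with $f^n[X]$ contained in one of these sets, so $f^n[X]$ misses $a$ or misses $b$. This contradicts $a,b\in K\se F_n=f^n[X]$. So $K=\{x\}$ for some $x$.

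Fourth, I show that $x$ is fixed, and this is the main obstacle. I need $f[K]=K$, or at least $f(x)\in K$. One inclusion is immediate: $f(x)\in f[F_m]=F_{m+1}\se F_m$ for all $m$. For $m=0$ this uses $f[F_0]=f[X]\se X=F_0$, and for $m\ge1$ it follows from $x\in F_{m-1}$, which gives $f(x)\in f[F_{m-1}]=F_m$. Hence $f(x)\in K=\{x\}$, so $f(x)=x$. Note that only monotonicity of the images is used here, so no continuity is needed; closedness of $f$ was used only to make the $F_m$ closed so that completeness applies. If one prefers not to use this shortcut, surjectivity of $f\restriction K$ onto $K$ would instead require a compactness-type argument, and that is exactly where I expect trouble. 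The direct argument above avoids it.

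Finally, uniqueness. If $y=f(y)$, then $y=f^m(y)\in f^m[X]=F_m$ for every $m$, so $y\in K=\{x\}$. Alternatively, one can apply the two-point cover $\{X\setminus\{x\},X\setminus\{y\}\}$ exactly as in the uniqueness part of Theorem \ref{funy fixed point}.
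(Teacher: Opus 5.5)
Your proof is correct and follows the same route as the paper. Both use the decreasing closed images $f^n[X]$, weak \v{C}ech completeness to get $\bigcap_n f^n[X]\neq\emptyset$, and the two-point cover $\{X\setminus\{a\},X\setminus\{b\}\}$ from the $T_1$ axiom. Your version is slightly more complete: you explicitly prove that $\bigcap_n f^n[X]$ is a singleton, which the paper uses without proof when it passes from $f(x)\in\bigcap_n f^n[X]$ to $f(x)=x$, and the same fact gives you uniqueness directly.
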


\begin{proof}  Let $(\mathcal{U}_i)_i$ be a sequence of open covers of $X$ from the definition of \v{C}ech completeness. The sequence $(f^n[X])_{n\in\w}$ is a non-increasing sequence of non-empty closed sets ($f$ is a closed mapping); thus, it is centered. Because $f$ is a topological contraction, for every $i\in\mathbb{N}$, there exists $n_i$ such that $f^{n_i}[X]$ is contained in some element of $\mathcal{U}_i$. Hence, because of the choice of the sequence of open covers $(\mathcal{U}_i)_i$,   the intersection
$\bigcap_nf^n[X]$ is nonempty. 

Now let $x\in\bigcap_nf^n[X]$. Because $f(x)\in\bigcap_nf^n[X]$, it follows that $f(x)=x$ and $x$ is a fixed point of $f$. 

Now we show the uniqueness of the fixed-point. Let us assume that there are two distinct fixed-points  $x,y\in X.$ Let us consider the open cover $\cu=\{ X\setminus\{x\},X\setminus\{y\}\}.$ Then there exists $n\in\w$ such that $f^n[X]\se X\setminus \{x\}$ or $f^n[X]\se X\setminus \{y\}.$. Let us assume that $f^n[X]\se X\setminus \{x\}$ for some $n\in\w.$ Observe that 
$$
x = f^n(x) \in f^n[X]\se X\setminus \{x\},
$$
which is impossible. The second case in which we get a contradiction is analogous.

\end{proof}

Because every compact space is a weak \v{C}ech complete space, we have the following corollary. 
\begin{theorem}\cite[Theorem 3]{MR2}\label{compact-fix} IF $X$ is $T_1$ compact, $f:X\to X$ closed topological contraction, then $f$ has a unique fixed point.
\end{theorem}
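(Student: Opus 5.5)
The plan is to deduce Theorem \ref{compact-fix} directly from Theorem \ref{Banach-T1-complete}. Since the statement is presented as a corollary, the only real work is to check that every compact $T_1$ space is weak \v{C}ech complete. Once that is done, a closed topological contraction on $X$ satisfies all hypotheses of Theorem \ref{Banach-T1-complete}, which yields both existence and uniqueness of the fixed point.

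To verify weak \v{C}ech completeness, I would take the trivial sequence of covers $\cu_i=\{X\}$ for all $i\in\w$. The condition ``$\forall i\,\exists m\,\exists U\in\cu_i\ F_m\se U$'' then holds automatically for any family of closed sets. So it remains to show that every centered countable family $\{F_m\in Clo(X):m\in\w\}$ has nonempty intersection. This is exactly the finite intersection property characterization of compactness. If $\bigcap_m F_m=\emptyset$, the complements $X\setminus F_m$ form an open cover. A finite subcover $X\setminus F_{m_1},\dots,X\setminus F_{m_k}$ would give $F_{m_1}\cap\dots\cap F_{m_k}=\emptyset$, contradicting centeredness. No separation axiom beyond the standing $T_1$ assumption is used here.

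As an alternative, one could argue directly, mirroring the proof of Theorem \ref{Banach-T1-complete}. Because $f$ is closed, the sets $f^n[X]$ form a decreasing sequence of nonempty closed sets, so by compactness their intersection $K$ is nonempty. One then checks $f[K]\se K$ and extracts a fixed point. Uniqueness comes from the two-element cover $\{X\setminus\{x\},X\setminus\{y\}\}$, exactly as before.

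The main obstacle, already implicit in the proof of Theorem \ref{Banach-T1-complete}, is passing from $x\in\bigcap_n f^n[X]$ to $f(x)=x$. It is clear that $f(x)\in\bigcap_n f^n[X]$, but concluding $f(x)=x$ requires knowing that $K=\bigcap_n f^n[X]$ is a singleton. I would handle this with the contraction property. If $a\neq b$ both lie in $K$, then the $T_1$ cover $\{X\setminus\{a\},X\setminus\{b\}\}$ produces some $n$ with $f^n[X]$ missing $a$ or missing $b$. Since $K\se f^n[X]$, this is a contradiction. Hence $K=\{x\}$, and $f(x)\in K$ forces $f(x)=x$. I would include this step explicitly, since it is what makes the reduction airtight.
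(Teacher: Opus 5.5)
Your proposal is correct and follows the paper's route: the paper obtains this theorem as an immediate corollary of Theorem \ref{Banach-T1-complete}, via the remark that every compact space is weak \v{C}ech complete, and you verify that remark with the trivial covers $\cu_i=\{X\}$ together with the finite intersection property. Your extra step showing that $\bigcap_n f^n[X]$ is a singleton, using the cover $\{X\setminus\{a\},X\setminus\{b\}\}$, is a genuine repair: the paper's proof of Theorem \ref{Banach-T1-complete} passes from $f(x)\in\bigcap_n f^n[X]$ to $f(x)=x$ without this justification.
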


Here, we give an example of a closed topological contraction that is not continuous.
\begin{example} Let $X=\{0,2,3\}\cup\{\frac{1}{n+1}: n\in\w\}$ with euclidean topology. Let us define $f:X\to X$ as follows: for any $x\in X$ set
$$
f(x) = \begin{cases}
    2 & x = \frac{1}{n+1} \text{ for some } n\in\w,\\
    3 & x\in \{0,2,3\}.
\end{cases}
$$
This function is not continuous at point $x = 0.$. Let us observe that $f[X]\se \{2,3\}$; then $f$ is closed. Because $f^2[X] = \{3\},$ then $f$ is a topological contraction.
\end{example}

Here we present the equivalence of the topological contraction with a much simpler condition in the case of compact $T_1$ spaces.

\begin{proposition}\label{proposition-contraction} Let $f:X\to X$ be a closed map on a compact $T_1$ space $X.$ Then the following conditions are equivalent:
\begin{enumerate}
    \item $f$ is a topological contraction on $X,$
    \item $(\forall x,y\in X)(\exists n\in\w)\; (f^n[X]\se X\setminus \{x\} \lor f^n[X]\se X\setminus \{y\}).$
\end{enumerate}
\end{proposition}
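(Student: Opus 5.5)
The direction (1)$\Rightarrow$(2) is immediate. For $x,y\in X$ consider the family $\cu=\{X\setminus\{x\},X\setminus\{y\}\}$. Since $X$ is $T_1$, both members are open. If $x\ne y$, then $\cu$ covers $X$, and the definition of topological contraction gives $n$ and $U\in\cu$ with $f^n[X]\se U$, which is exactly (2). If $x=y$, the two sets coincide and $\cu$ need not cover $X$, so this case must be handled separately. I read (2) as quantified over distinct $x,y$, as in the remark in the Introduction about two-element covers $\{\{x\}^c,\{y\}^c\}$. Failing that, I will note that for $x=y$ the condition asks for $x\notin f^n[X]$ for some $n$, and I will adjust the reading accordingly.

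For (2)$\Rightarrow$(1) I will use compactness and closedness. The sets $K_n=f^n[X]$ are closed, since $f$ is a closed map and $X$ is closed. They are nonempty and non-increasing: $K_{n+1}=f^n[f[X]]\se f^n[X]$. Let $K=\bigcap_n K_n$. By compactness, $K\ne\emptyset$, and the family $\{K_n\}$ is centered.

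\textbf{Key step:} show that $K$ is a singleton. Suppose $x\ne y$ both lie in $K$. By (2) there is $n$ with $f^n[X]\se X\setminus\{x\}$ or $f^n[X]\se X\setminus\{y\}$, i.e. $x\notin K_n$ or $y\notin K_n$. This contradicts $x,y\in K\se K_n$. Hence $K=\{z\}$ for some $z$.

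Now let $\cu$ be any open cover of $X$ and pick $U\in\cu$ with $z\in U$. Then $\bigcap_n K_n=\{z\}\se U$, so $\{K_n\setminus U\}_n$ is a non-increasing family of closed subsets of the compact space $X$ with empty intersection. By the finite intersection property some $K_n\setminus U$ is empty, i.e. $f^n[X]\se U$. Thus $f$ is a topological contraction.

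The main subtlety is only in the bookkeeping. The singleton step needs (2) only for distinct points, whereas the trivial direction requires excluding the degenerate case $x=y$. I also have to make sure closedness of $f$ is used exactly where the $K_n$ must be closed; it is essential both for nonemptiness of $K$ and for the final compactness argument. Note that no continuity of $f$ is needed, which matches Theorem \ref{compact-fix}.
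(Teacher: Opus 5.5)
Your proof is correct and is essentially the paper's argument. Closedness and compactness make $\bigcap_n f^n[X]$ nonempty, and they force $f^n[X]\se U$ once that intersection lies in $U$; condition (2) rules out two distinct points in the intersection. The difference is where the distinguished point comes from. The paper takes the fixed point supplied by Theorem \ref{compact-fix}, whose hypothesis is precisely (1), so that citation is formally circular. You instead obtain the point directly as the unique element of $\bigcap_n f^n[X]$, which makes your version self-contained. Your restriction of (2) to distinct $x,y$ is also the right reading: for $x=y$ the condition would contradict $\bigcap_n f^n[X]\neq\emptyset$.
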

\begin{proof} The implication (1) to (2) is true by the definition of the topological contraction.

Then we show the other direction. Let us assume that (2) is true but (1) does not hold. By the fixed point Theorem \ref{compact-fix}, there exists a unique $x\in X$ such that $x = f(x).$ Let $\cu$ be any open cover of $X$ and $U\in\cu$ with $x\in U.$ We show that there exists $n\in\w$ such that $f^n[X]\se U.$ If not, then by the compactness of $X$ and taking into account that $f$ is a closed map, we have $U^c\cap \bigcap\{f^m[X]:n\in\w\}\ne \emptyset.$ Then let us choose any $y\in (\bigcap\{f^n[X]:n\in\w\})\setminus U.$ Then for every $n\in\w$, $f^m[X]$ is not contained in $X\setminus\{x\}$ and $f^n[X]$ is not contained in $X\setminus\{y\},$ which is impossible by $(2)$ condition.
\end{proof}

The fixed-point theorem \ref{compact-fix} is an entry point to attractors in $T_1$ compact spaces with respect to the closed IFS system.

\subsection{IFS and its Hutchinson atractors}
The Hutchinson operator for fixed IFS (iterated function systems) is mainly considered from a complete metric point of view  see \cite{LSS, Ba} and \cite{H} for example. In this context, the Banach fixed-point Theorem plays a crucial role. We define the IFS system on $T_1$ compact space analogously, and instead of the Banach Theorem, we use Theorem \ref{compact-fix} and the other one.

Iterated function systems (IFS) on topological spaces can be found in \cite{Mih} for example.

For a fixed topological space $X,$, any finite family $\cf\se X^X$ is called an IFS, i.e., an iterated function system. By $2^X\se P(X)$ we mean the space of all nonempty closed subsets (called the hyperspace of $X$) equipped with the Vietoris topology generated by the following sets
$$
V(U_0,V_1,\ldots, V_n) = \{ F\in 2^X:\; (F\se V_0) \land (\forall i\le n)\ (i>0 \then F\cap U_i\ne \emptyset)\},
$$
where $n\in\w$ and $\{ U_i:\ i\le n\}$ is a family of open subsets of $X.$
It is well known that if $X$ is a $T_1$ compact space, then $2^X$ is also $T_1$ and compact.

Now we recall the contractivity of the IFS Hutchinson operator.
\begin{definition} For a fixed topological space $X$, let $\cf\in [X^X]^{<\w}$ be an IFS and let us assume that $\cf=\{ f_i:\; i < |\cf|\}$ is its enumeration.

Then $\cf$ is a contractive IFS if, for every open cover $\cu$ of $X$, there exists $n\in\w$ such that for every sequence $s\in |\cf|^n$, there exists $U\in \cu$ such that $f_{s(0)}\circ \ldots \circ f_{s(n-1)} [X] \se U,$
, where $\circ$ is the composition of functions.
\end{definition}

\begin{definition} If $\cf$ is IFS on space $X$, the $F_\cf:2^X \to 2^X$ defined by
$$
2^X \ni A \to F_\cf(A) = \bigcup\{ f[A]: f\in\cf\}\in 2^X
$$
is called the Hutchinson operator of IFS $\cf.$ Sometimes we will drop the subscript.
\end{definition}

Now we show that contractive IFS reflects on the contractivity of the Hutchinson operator in the hyperspace.
\begin{theorem}\label{Hutchinson-contractive} If $\cf$ is a contractive IFS on a compact $T_1$ space $X$, then the Hutchinson operator $F_\cf$ is a topological contraction on the hyperspace $2^X.$
\end{theorem}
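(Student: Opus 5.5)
Fix an open cover $\mathcal{W}$ of $2^X$ in the Vietoris topology. We must find $n\in\w$ and $W\in\mathcal{W}$ with $F^n(A)\in W$ for every $A\in 2^X$, where $F=F_\cf$. The plan is to locate a distinguished closed set $K$ (the future attractor), put it inside a basic Vietoris neighbourhood contained in some member of $\mathcal{W}$, and then show that a single iterate $F^n$ pushes every $A$ into that neighbourhood. Throughout I take the maps in $\cf$ to be closed, as needed for $F_\cf$ to map $2^X$ into $2^X$.

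\textbf{Step 1: iterates and the set $K$.} By induction on $n$,
$$F^n(A)=\bigcup\{f_{s(0)}\circ\ldots\circ f_{s(n-1)}[A]:\ s\in|\cf|^n\}.$$
I write $f_s$ for this composition. Since $f_s[A]\se f_s[X]$, we get $F^n(A)\se F^n(X)$ for every $A$.

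For $s\in|\cf|^{n+1}$ we have $f_s[X]\se f_{s\restriction n}[X]$, because $f_{s(n)}[X]\se X$. Hence the sets $F^n(X)$ form a non-increasing sequence. Each of them is nonempty and closed, as the maps are closed. Put
$$K=\bigcap_n F^n(X).$$
By compactness of $X$, $K\in 2^X$.

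Choose a basic set $V(V_0,V_1,\ldots,V_k)$ containing $K$ and contained in some $W\in\mathcal{W}$. Thus $K\se V_0$, and for each $i$ with $1\le i\le k$ we can pick $p_i\in K\cap V_i$.

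\textbf{Step 2: the upper condition.} Since the closed sets $F^n(X)\setminus V_0$ decrease and have empty intersection, compactness gives $n_1$ with $F^n(X)\se V_0$ for all $n\ge n_1$. Then $F^n(A)\se V_0$ for every $A\in 2^X$ and every $n\ge n_1$.

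\textbf{Step 3: the lower conditions (the main point).} I apply contractivity of $\cf$ to a finite open cover $\cu$ of $X$. Its members are the sets $\bigcap_{i=1}^k W_i$ with each $W_i\in\{V_i,\ X\setminus\{p_i\}\}$. Each $X\setminus\{p_i\}$ is open because $X$ is $T_1$, and these sets cover $X$ because every point lies in $V_i$ or in $X\setminus\{p_i\}$.

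Contractivity yields $n_2$ such that every $f_s[X]$ with $|s|=n_2$ lies in some member of $\cu$. By the monotonicity $f_s[X]\se f_{s\restriction n_2}[X]$, the same holds for all $n\ge n_2$.

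Now fix $n\ge n_2$ and $i\le k$. Since $p_i\in K\se F^n(X)$, there is $s\in|\cf|^n$ with $p_i\in f_s[X]$. The member of $\cu$ containing $f_s[X]$ then cannot use $X\setminus\{p_i\}$ in its $i$-th coordinate, so $f_s[X]\se V_i$. Consequently, for every $A$,
$$\emptyset\ne f_s[A]\se F^n(A)\cap V_i.$$

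\textbf{Conclusion.} Taking $n=\max(n_1,n_2)$, Steps 2 and 3 give $F^n(A)\in V(V_0,V_1,\ldots,V_k)\se W$ for all $A\in 2^X$. Hence $F^n[2^X]\se W$, which is exactly the definition of a topological contraction.

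The delicate point is Step 3: the lower Vietoris conditions must hold uniformly in $A$. This works only because $p_i$ lies in every $F^n(X)$, so some piece $f_s[X]$ must pass through $p_i$, and the $T_1$-cover $\{V_i,\ X\setminus\{p_i\}\}$ then forces that piece into $V_i$.
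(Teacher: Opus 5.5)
Your proof is correct, and it takes a different route from the paper.

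\textbf{The paper's route.} The paper never works with an arbitrary open cover of $2^X$. It invokes Proposition~\ref{proposition-contraction}: for a closed self-map of a compact $T_1$ space, being a topological contraction is equivalent to the two-point condition (2). It then verifies only (2) for $F_\cf$. It takes distinct $K_0,K_1$ with $z\in K_1\setminus K_0$ and assumes both lie in every $F^m[2^X]$. Applying contractivity of $\cf$ to the two-set cover $\{X\setminus K_0,\ X\setminus\{z\}\}$ gives a word $t$ with $z\in f_t[X]\se X\setminus K_0$. This is impossible, because $f_t[E]\se K_0$ for some nonempty $E$ with $F^m(E)=K_0$.

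\textbf{Your route.} You check the definition directly. You build the candidate attractor $K=\bigcap_n F^n(X)$. The upper Vietoris condition comes from compactness of $X$. The lower conditions come, uniformly in $A$, from the $T_1$ covers $\{V_i,\ X\setminus\{p_i\}\}$ with $p_i\in K\cap V_i$.

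\textbf{Comparison.} The paper's argument is shorter and uses contractivity only for two-element covers. However, its reduction needs $F_\cf$ to be a \emph{closed} map of $2^X$. This is where Proposition~\ref{proposition-contraction} uses closedness: to find a point of $\bigcap_n F^n[2^X]$ outside the chosen member of the cover. The paper never checks this hypothesis. In fact, the last theorem of the paper exhibits a contractive IFS of closed maps whose Hutchinson operator is not closed.

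Your argument uses only compactness and $T_1$ of $X$, together with closedness of the sets $F^n(X)$. The latter is just the implicit requirement that $F_\cf$ maps $2^X$ into $2^X$. So your proof avoids this gap. It also gives extra information: every $F^n(A)$ enters a given Vietoris neighbourhood of $K$ after a number of steps independent of $A$.
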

\begin{proof} Because $2^X$ is compact, it is enough to show the second condition from Proposition \ref{proposition-contraction}.

Let $K_0,K_1\in 2^X$ be arbitrary distinct closed subsets of $X.$ Without loss of generality, we can assume that there is $z \in K_1\setminus K_0.$ We have to show that at some $n\in\w$ $F[2^X]\se 2^X\setminus \{K_0\}$ or $F[2^X]\se 2^X\setminus \{K_1\}.$ If not, then for every $n\in\w$ $\{ K_0,K_1\} \se F^n[2^X].$ Then for every $n\in \w$, there are $E_n^0, E_n^1\in 2^X$ such that $K_i = F(E_n^i)$ for each $i\in\{0,1\}.$

For each sequence $s\in|\cf|^n$, set 
$$
f_s = f_{s(0)}\circ\ldots \circ f_{s(n-1)}.
$$
Let us observe that for each $n\in\w$ we have
$$
F^n(E_n^i) = \bigcup\{ f_s[E_n^i]: s\in |\cf|^n\},
$$
for $i\in \{0,1\}.$ Then for each $n\in\w$ there is $s_n\in |\cf|^n$ such that $z \in f_{s_n}[E_n^1].$

Now let us consider a two point open cover of $X$ $\cu = \{K_0^c, \{z\}^c\}.$ Then by the contractivity of $\cf$, there is $m\in\w$ such that for every $s\in |\cf|^m$ $f_s[X]\se K_0^z$ or $f_s[X]\se \{z\}^c.$ But for some $t\in |\cf|^m$, we have $z \in  f_t[E_m^1] \se f_t[X]$ and then $f_t[X]\se K_0^c.$ Now we have 
$$
f_t[E_m^0]\se f_t[X] \se K_0^c
$$
but from other hand we have 
$$
f_t[E_m^0]\se F^m(E_m^0) = K_0,
$$
what is impossible.
\end{proof}

Now we show the result regarding the existence of a fixed point for the Hutchinson operator of IFS.
\begin{theorem}\label{fix-Hutchinson}\cite[Theorem 5]{MR2} Let $\cf$ be an IFS of closed maps on a compact $T_1$ space $X.$ Then, the Hutchinson operator $F_\cf$ has a fixed point in $2^X.$
\end{theorem}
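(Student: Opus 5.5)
The plan is a minimality argument with Zorn's Lemma, which avoids any need for continuity of the maps. Consider the family
$$
\mathcal{K} = \{ K\in 2^X:\; F_\cf(K)\se K\},
$$
partially ordered by reverse inclusion. I aim to find a minimal element $K$ of $\mathcal{K}$ and then show that $F_\cf(K) = K$.

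First, $F_\cf$ really maps $2^X$ into $2^X$. Each $f\in\cf$ is a closed map, so $f[A]$ is closed whenever $A$ is closed. A finite union of closed sets is closed, and $F_\cf(A)\ne\emptyset$ for $A\ne\emptyset$. Hence $F_\cf(A)\in 2^X$ for every $A\in 2^X$. Moreover $X\in\mathcal{K}$, so $\mathcal{K}\ne\emptyset$. I also record that $F_\cf$ is monotone: $A\se B$ implies $F_\cf(A)\se F_\cf(B)$.

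Next I verify the chain condition. Let $\mathcal{C}\se\mathcal{K}$ be a nonempty chain and set $L=\bigcap\mathcal{C}$. The family $\mathcal{C}$ consists of nonempty closed sets and is linearly ordered by inclusion, so it is centered. By compactness of $X$, $L$ is a nonempty closed set. For every $K\in\mathcal{C}$, monotonicity gives $F_\cf(L)\se F_\cf(K)\se K$. Hence $F_\cf(L)\se L$, so $L\in\mathcal{K}$ and $L$ is a lower bound of $\mathcal{C}$ under inclusion. Zorn's Lemma then yields a $K\in\mathcal{K}$ that is minimal with respect to inclusion.

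Finally I show $F_\cf(K)=K$. Put $K'=F_\cf(K)$; then $K'\in 2^X$ and $K'\se K$. Applying monotonicity to $K'\se K$ gives $F_\cf(K')\se F_\cf(K)=K'$, so $K'\in\mathcal{K}$. Minimality of $K$ forces $K'=K$, that is, $F_\cf(K)=K$.

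The step I expected to be the main obstacle is a more naive route. One would take $A=\bigcap_n F_\cf^n(X)$ and try to show $A\se F_\cf(A)$. That requires commuting $f[\cdot]$ with a decreasing intersection, i.e.\ $\bigcap_n f[A_n]=f[\bigcap_n A_n]$. This seems to need closed fibres $f^{-1}(x)$, which closedness of $f$ does not provide without continuity. The Zorn argument only uses that images of closed sets are closed, together with compactness and $T_1$ (the latter only so that $2^X$ is the intended hyperspace). It therefore sidesteps this difficulty.
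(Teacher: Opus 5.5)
Your proof is correct, but it is not the paper's route. The paper uses a transfinite iteration from the top:
\[
G_0=F_\cf(X),\qquad G_{\beta+1}=F_\cf(G_\beta),\qquad G_\lambda=\bigcap_{\beta<\lambda}G_\beta \ \text{ for limit } \lambda .
\]
Monotonicity makes this sequence non-increasing. Closedness of the maps and compactness keep every term in $2^X$. A strictly decreasing ordinal-indexed family of subsets of $X$ cannot continue forever, so some $\alpha$ has $G_{\alpha+1}=G_\alpha$, i.e.\ $F_\cf(G_\alpha)=G_\alpha$. This is how the paper handles the obstacle you identified with $\bigcap_n F_\cf^n(X)$: it does not stop at stage $\omega$ but iterates until stabilization.

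The ingredients are the same as yours: monotonicity, closed images of closed sets, and nonempty intersections of chains by compactness. The two arguments, however, produce different fixed points. By induction, every $D\in 2^X$ with $D\se F_\cf(D)$, and in particular every fixed point, satisfies $D\se G_\beta$ for all $\beta$. So the paper obtains the greatest fixed point. Your Zorn argument instead yields a minimal $F_\cf$-invariant closed set, hence a minimal fixed point. For $\cf=\{\mathrm{id}_X\}$ the paper's construction gives $X$, while yours gives a singleton.

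The paper's recursion can be carried out without the axiom of choice, which Zorn's Lemma needs. Your argument is shorter and avoids any bookkeeping about when the iteration stabilizes. Neither proof uses the $T_1$ hypothesis in an essential way.
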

\begin{proof} We use the analogy of Bendixon rank. We define a transfinite sequence of subsets of $X$ $\{G_\alpha: \alpha\in On\}$ as follows:
\begin{enumerate}
    \item $G_0 = F_\cf(X),$
    \item if $\alpha = \beta+1$ then $G_\alpha = F_\cf(G_\beta),$
    \item if $\alpha$ is a limit ordinal, then $G_\alpha = \bigcap\{ G_\beta:\beta<\alpha\}.$
\end{enumerate}
The defined sequence above is non-increasing and consists of closed, nonempty sets. Then there is $\alpha < (2^{|X|})^+$ such that $G_{\alpha+1} = G_\alpha.$ Then $G_\alpha = G_{\alpha+1} = F_\cf(G_\alpha)$ is a fixed point of the Hutchinson operator $F_\cf.$
\end{proof}
Let us remark that in the above Theorem \ref{fix-Hutchinson}, IFS of closed maps on $X$ may not be contractive on $X.$ However, we do not have a guaranty that such a fixed-point is unique.

\begin{theorem}\cite[Theorem 2]{MR1} If $X$ is a compact $T_1$ space and $\cf$ is a contractive IFS of closed maps on $X$, then the Hutchinson operator $F_\cf$ has a unique fixed-point in $2^X.$
\end{theorem}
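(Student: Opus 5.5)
The plan is to apply the compact fixed-point Theorem \ref{compact-fix} to the Hutchinson operator $F_\cf$ acting on the hyperspace $2^X$. Since $X$ is compact and $T_1$, the hyperspace $2^X$ with the Vietoris topology is again compact and $T_1$, as recalled above. By Theorem \ref{Hutchinson-contractive}, a contractive IFS $\cf$ yields a Hutchinson operator $F_\cf$ that is a topological contraction on $2^X$. So the only hypothesis of Theorem \ref{compact-fix} left to check is that $F_\cf:2^X\to 2^X$ is a closed map. Granted this, Theorem \ref{compact-fix} gives existence and uniqueness at once. Existence alone already follows from Theorem \ref{fix-Hutchinson}.

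Before that, I would check that $F_\cf$ is well defined into $2^X$. For closed $A$, each $f[A]$ is closed because $f$ is a closed map. A finite union of closed sets is closed, and it is nonempty whenever $A$ is.

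The main obstacle is showing that $F_\cf$ is closed in the Vietoris topology. There is a way to sidestep it. Theorem \ref{Banach-T1-complete}, and hence Theorem \ref{compact-fix}, use closedness only to know that the sets $F_\cf^n[2^X]$ are closed and that the sequence of these sets is centered. The sequence is centered trivially, since it is non-increasing and every member is nonempty. So I would redo that argument directly in $2^X$, as follows.

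First, existence. Theorem \ref{fix-Hutchinson} supplies a fixed point directly, with no closedness in $2^X$ needed.

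Second, uniqueness. This uses only the contraction property, exactly as in the last paragraph of the proof of Theorem \ref{Banach-T1-complete}. Suppose $K_0\ne K_1$ are both fixed points of $F_\cf$. The sets $2^X\setminus\{K_0\}$ and $2^X\setminus\{K_1\}$ are open because $2^X$ is $T_1$, and together they form an open cover of $2^X$. By Theorem \ref{Hutchinson-contractive} there is $n$ with $F_\cf^n[2^X]$ contained in one of them, say in $2^X\setminus\{K_0\}$. But $K_0=F_\cf^n(K_0)\in F_\cf^n[2^X]$, a contradiction.

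This avoids verifying Vietoris-closedness entirely. The price is that Theorem \ref{Hutchinson-contractive} itself went through Proposition \ref{proposition-contraction}, which assumed a closed map on the compact space $2^X$. If that dependence must be justified, I would prove that $F_\cf$ is closed on $2^X$. The key fact to aim for is that each $A\mapsto f[A]$ is upper semicontinuous for closed $f$. Indeed, if $f[A]\subseteq V$, then $W=X\setminus f^{-1}[X\setminus V]$ is open, because $f$ is closed, and $A\subseteq W$. Combining this with compactness of $2^X$, I expect the image of a closed set of $2^X$ to be closed.
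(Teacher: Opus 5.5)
Your actual argument is the paper's proof: existence from Theorem~\ref{fix-Hutchinson}, and uniqueness from Theorem~\ref{Hutchinson-contractive} applied to the two-element cover $\{2^X\setminus\{K_0\},\,2^X\setminus\{K_1\}\}$. The problem is the fallback in your last paragraph, which would fail. $F_\cf$ need not be a closed map on $2^X$. The final theorem of the paper (with $X=\mathbb{N}\cup\{a,b\}$ and $\cf=\{f,g\}$) gives a contractive IFS of closed maps with $\{a\}\in\clo{F_\cf[2^X]}\setminus F_\cf[2^X]$. So even the image of the closed set $2^X$ is not closed, and no argument can make $F_\cf$ closed in general. Your semicontinuity step is also wrong. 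The set $X\setminus f^{-1}[X\setminus V]$ equals $f^{-1}[V]$, and its openness is continuity of $f$, not closedness. Closedness only makes $X\setminus f[X\setminus W]$ open for open $W$. For instance, the closed, discontinuous map in the paper's example on $\{0,2,3\}\cup\{\frac{1}{n+1}:n\in\w\}$ is not upper semicontinuous as $A\mapsto f[A]$ at $\{0\}$.

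Your worry about the dependence on Proposition~\ref{proposition-contraction} is nonetheless legitimate, and the right fix is simpler. Uniqueness never needs the full topological-contraction property of $F_\cf$. It needs only condition (2) of Proposition~\ref{proposition-contraction} for the two fixed points, i.e.\ only the two-element cover. The proof of Theorem~\ref{Hutchinson-contractive} establishes (2) directly from contractivity of $\cf$, without using closedness of $F_\cf$; the proposition is invoked only to upgrade (2) to (1).

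You can also argue directly. Suppose $K_0\ne K_1$ are both fixed and $z\in K_1\setminus K_0$.
\begin{enumerate}
\item Apply contractivity of $\cf$ to the open cover $\{X\setminus K_0,\,X\setminus\{z\}\}$ of $X$ to get $m$.
\item Since $K_1=F_\cf^m(K_1)=\bigcup_{s\in|\cf|^m}f_s[K_1]$, some $t\in|\cf|^m$ has $z\in f_t[K_1]\se f_t[X]$, which forces $f_t[X]\se X\setminus K_0$.
\item But $\emptyset\ne f_t[K_0]\se F_\cf^m(K_0)=K_0$, a contradiction.
\end{enumerate}
Closedness of the maps in $\cf$ is used only to make $F_\cf$ send $2^X$ into $2^X$, and in the existence part.
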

\begin{proof} By the previous Theorem \ref{fix-Hutchinson}, there exists $D\in 2^X$ such that $F_\ch(D) = D.$. Let us assume that there exists another $E\in 2^X$ fixed-point of $F_\cf.$ Because $\cf$ is an IFS contraction, then by Theorem \ref{Hutchinson-contractive}, $F_\cf$ is also a topological contraction on $2^X.$. Then, by the contractivity of our Hutchinson operator, there exists $n\in\w$ such that
$$
\{F^n_\cf(E), F^n_\cf(G)\} \se 2^X\setminus\{E\} \text{ or } 
\{F^n_\cf(E), F^n_\cf(G)\} \se 2^X\setminus\{G\}.
$$
But $F^n_\cd(E) = E,$ and $F^n_\cf(G) = G$ imply that the above inclusion is impossible.
\end{proof}

One can try to apply Theorem \ref{compact-fix} on compact hyperspace $2^X$ (whenever $X$ $T_1$ is compact). But contractive IFS of closed operators on $X$ do not guaranty that its Hutchinson is a closed map on $2^X.$ Here is an example.
\begin{theorem}  There exists a $T_1$ compact space $X$ and a contractive IFS inducing a Hutchinson operator that is not closed (as a mapping from $2^X$ to $2^X$).
\end{theorem}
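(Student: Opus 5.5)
The statement is an existence claim, so I would prove it by an explicit counterexample. Note first what cannot work. If $X$ is Hausdorff and the maps are continuous, then $F_\cf$ is continuous on the compact Hausdorff hyperspace, hence closed. If $\cf$ consists of a single closed contractive map, then some iterate $f^n[X]$ is forced to be finite, and the images of closed families tend to be finite, hence closed in the $T_1$ space $2^X$. So the example should be a non-Hausdorff compact $T_1$ space, and the IFS should have at least two maps. One map may have infinite image, provided all compositions of length $n$ are small.

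My candidate space is $X=\w\cup\{p,q\}$: points of $\w$ are isolated, and the basic neighbourhoods of $p$ (resp. $q$) are $\{p\}\cup A$ (resp. $\{q\}\cup A$) with $A$ co-finite in $\w$. This is the "doubled limit" of the space in Example \ref{no_closed_graph}. It is $T_1$ and compact, and a closed set containing infinitely many naturals must contain both $p$ and $q$. In the Vietoris topology the sequence $\{n\}$ accumulates at $\{p\}$, $\{q\}$, $\{p,q\}$ and $X$. Hence
$$
\mathcal{A}=\{\{n\}:n\in\w\}\cup\{\{p\},\{q\},\{p,q\},X\}
$$
is a closed subset of $2^X$. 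I would check this directly from the basic sets $V(U_0,U_1,\ldots,U_n)$.

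Next I would take $\cf=\{f_0,f_1\}$ and verify four things.
\begin{enumerate}
\item Each $f_i$ is a closed map. The image of every finite set is finite, so only images of closed sets containing $p$, $q$ and a tail need checking.
\item $\cf$ is contractive. By compactness it should suffice to treat covers containing a member of the form $\{p\}\cup A$ or $\{q\}\cup A$. I would aim for all compositions $f_s$ of length $2$ to have finite image avoiding $p$ or avoiding $q$.
\item The set $\{F_\cf(\{n\}) : n\in\w\}$ is infinite and accumulates at some $K\in 2^X$.
\item $K\notin F_\cf[\mathcal{A}]$.
\end{enumerate}
A natural first attempt is to let $f_1$ be constant and to let $f_0$ act as a shift-like map on $\w$, sending $p,q$ to points chosen so that $f_0[\{p\}]$, $f_0[\{q\}]$, $f_0[\{p,q\}]$ and $f_0[X]$ miss the required limit. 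Then $F_\cf(\{n\})=\{f_0(n),c\}$ has limit $\{p,c\}$ or $\{p,q,c\}$, and this limit should not be among the four images of the extra members of $\mathcal{A}$.

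The main obstacle is the tension between items 1 and 2. Closedness of $f_0$ forces $f_0[X]$ to contain both $p$ and $q$ whenever it contains infinitely many naturals. Contractivity forbids compositions whose image contains both $p$ and $q$. I would therefore make $f_0$ collapse: for instance, send odd numbers to even numbers, send even numbers to $0$, and send $p,q$ into $\w$. Then $f_0[X]$ is closed but $f_0^2[X]$ is finite and inside $\w$, while $f_0$ stays closed. I would then recheck the four images in item 4 against the Vietoris limit of $\{f_0(n),c\}$. If this particular bookkeeping fails, I would adjust the values $f_0(p), f_0(q)$, or enlarge the space by one extra isolated point as a target for the constant map, until the limit set $K$ is not attained.
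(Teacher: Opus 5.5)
Your proposal never reaches an actual example, and the plan cannot be completed in the space you chose. First, your concrete candidate already fails item~1. If $f_0$ sends the odd numbers injectively to even numbers and sends $p,q$ into $\omega$, then $f_0[X]$ is an infinite subset of $\omega$ that misses $p$ and $q$, hence is not closed. This is exactly the obstruction you stated one sentence earlier.

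More seriously, \emph{no} choice of maps works on $X=\omega\cup\{p,q\}$. Let $f\in\mathcal{F}$ be closed and suppose $f[\omega]\cap\omega$ is infinite. Choose an infinite $B\subseteq\omega$ with $f[B]\subseteq\omega$ infinite. Then $E=B\cup\{p,q\}$ is closed, and $f[E]=f[B]\cup\{f(p),f(q)\}$ contains infinitely many naturals. Closedness therefore forces $p,q\in f[E]$, i.e.\ $\{f(p),f(q)\}=\{p,q\}$. But then $\{p,q\}\subseteq f^n[X]$ for every $n$, and the constant index sequence $s$ with $f_{s(j)}=f$ violates contractivity for the cover $\{X\setminus\{p\},X\setminus\{q\}\}$. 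Hence every member of $\mathcal{F}$ has finite range, and $F_{\mathcal{F}}$ takes values among the nonempty subsets of one fixed finite set. Since $2^X$ is $T_1$, $F_{\mathcal{F}}[\mathcal{A}]$ is finite, hence closed, for every $\mathcal{A}\subseteq 2^X$. So items 3--4 are unattainable. Your fallbacks (changing $f_0(p),f_0(q)$, or adding finitely many isolated points) leave this argument intact. A minor point: the singletons $\{n\}$ do not accumulate at $X$ in the Vietoris topology, since the neighbourhood of $X$ consisting of closed sets meeting both $\{0\}$ and $\{1\}$ contains no singleton. Your $\mathcal{A}$ is nevertheless closed.

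The missing idea is a topology in which every infinite closed set must contain, besides the ``limit'' points, an ordinary point whose image supplies the second limit point. Then a closed map can have infinite range without permuting the limit points. The paper achieves this with $X=\mathbb{N}\cup\{a,b\}$:
odd numbers are isolated;
every open set containing an even number contains all but finitely many odd numbers;
open sets meeting $\{a,b\}$ are cofinite.
Thus every infinite closed set contains $a$, $b$ and some even number.

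The maps $f,g$ both send $2n-1\mapsto 2n$. In addition, $f(2n)=b$ and $f[\{a,b\}]=\{a\}$, while $g(2n)=a$ and $g[\{a,b\}]=\{b\}$. Both maps are closed: for an infinite closed $E$ the image is a subset of $\{a,b\}\cup\mathrm{EVEN}$ containing $a$ and $b$ (one of them supplied by an even point of $E$), and every such set is closed. Every composition of length $3$ is constant, so the IFS is contractive. One then takes $\mathcal{A}=2^X$ itself. On one hand $\{a\}\notin F[2^X]$, since $F(E)\supseteq\{a,b\}$ whenever $E$ meets $\{a,b\}\cup\mathrm{EVEN}$, and $F(E)\subseteq\mathrm{EVEN}$ otherwise. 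On the other hand, every Vietoris neighbourhood of $\{a\}$ contains some $\{2n\}=F(\{2n-1\})$, so $\{a\}$ lies in the closure of $F[2^X]$.
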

\begin{proof} Let 
$$\textrm{ODD}=\{1,3,5,\ldots\}$$
and
$$\textrm{EVEN}=\{2,4,6,\ldots\}.$$
Let 
$$X=\mathbb{N}\cup\{a,b\}$$
where $a\neq b$, $a,b\notin\mathbb{N}$, and the set $A\subseteq X$ is open if:

\vspace{0.2 cm}

\noindent i) $A\subset \textrm{ODD}$,

\vspace{0.2 cm}

or  

\vspace{0.2 cm}

\noindent ii) $A\subseteq\mathbb{N}$ and the set $\textrm{ODD}\setminus A$ is finite,

\vspace{0.2 cm}

or  

\vspace{0.2 cm}

\noindent iii) $A\cap\{a,b\}\neq\emptyset$ and $A$ is a co-finite set in $X$.

\vspace{0.2 cm}

We define an IFS $\mathcal{F}$ as 
$$\mathcal{F}=\{f,g\},$$
where 
$$f(x)=\left\{
\begin{array}{ccc}
a&\textrm{if}&x=a,b,\\
b&\textrm{if}&x=2n,n\in\mathbb{N},\\
2n&\textrm{if}&x=2n-1,n\in\mathbb{N},\\
\end{array} 
\right.
$$
and
$$g(x)=\left\{
\begin{array}{ccc}
b&\textrm{if}&x=a,b,\\
a&\textrm{if}&x=2n,n\in\mathbb{N},\\
2n&\textrm{if}&x=2n-1,n\in\mathbb{N}.
\\
\end{array} 
\right.
$$

\vspace{0.2 cm}

Let $E\subseteq X$ be a nonempty closed set. If $E$ is finite,
then $f[E]$ is also finite and, therefore, closed. 
If $E$ is infinite, then $a,b\in E$ and $E\cap\textrm{EVEN}\neq\emptyset$.
Then $\{a,b\}\subseteq f[E]\subseteq\{a,b\}\cup\textrm{EVEN}$ which is a closed set. 

Thus $f$ is a closed mapping. Analogously, one argues that $g$ is closed.

Now let us consider a composition $h_n\circ h_{n-1}\circ\ldots\circ h_2\circ h_1$, where each $h_i$ is equal to either $f$ or $g$.
Because $f[X]=g[X]=\textrm{EVEN}\cup\{a,b\}$ we have $h_1[X]=\textrm{EVEN}\cup\{a,b\}$. Because 
$$f[\textrm{EVEN}\cup\{a,b\}]=g[\textrm{EVEN}\cup\{a,b\}]=\{a,b\}$$
we have 
$$h_2\circ h_1[X]=\{a,b\}.$$
Because $f[\{a,b\}]=\{a\}$ and $g[\{a,b\}]=\{b\}$ we have for $n\geq3$
$$h_n\circ h_{n-1}\circ\ldots\circ h_2\circ h_1[X]=\{a\}$$
or
$$h_n\circ h_{n-1}\circ\ldots\circ h_2\circ h_1[X]=\{b\},$$
hence the IFS $\mathcal{F}=\{f,g\}$ is contractive.

Let $F$ be the Hutchinson operator induced by $\mathcal{F}$, namely $F:2^X\to2^X$ is defined as 
$$F(E):=f[E]\cup g[E].$$
We shall show that $F$ is not closed as a mapping from $2^X$ to $2^X$.

The one point set $\{a\}$ is not in the image $F[2^X]$ of the hyperspace $2^X$ via $F$. 
Thus it is enough to show that $\{a\}\in\overline{F[2^X]}$. Let $S(V_0;V_1,\ldots,V_k)$ be a base neigbourhood of $\{a\}$ which simply means here that $a\in\bigcap_{i=0}^kV_i$. Thus the set $\bigcap_{i=0}^kV_i$ is an open neighbourhood of $a$. Hence it must be a co-finite subset of $X$ and $2n\in\bigcap_{i=0}^kV_i$, for some $n\in\mathbb{N}$. This implies that $\{2n\}\in S(V_0;V_1,\ldots,V_k)$. We also have $\{2n\}=F(\{2n-1\})$. Hence $S(V_0;V_1,\ldots,V_k)\cap F[2^X]\neq\emptyset$. We conclude that $\{a\}\in \overline{F[2^X]}\setminus F[2^X]$.  
\end{proof}

\end{document}